  \newcommand{\J}{{\sf J}}
   \newcommand{\RPL}{{\sf RPL}}
   \newcommand{\F}{{\sf RPLJ}}
  \newcommand{\M}{{\mathcal M}}
  \newcommand{\E}{{\mathcal E}}
\newcommand{\CS}{{\sf CS}}
 \newcommand{\W}{{\mathcal W}}
  \newcommand{\V}{{\mathcal V}}
\newcommand{\R}{{\mathcal R}}
 \def\r{\rightarrow}
  \def\vd{\vdash}
\begin{document}

\title{Justification Logics in a Fuzzy Setting}
\author{Meghdad Ghari}
\institute{School of Mathematics,
Institute for Research in Fundamental Sciences (IPM), \\ P.O.Box: 19395-5746, Tehran, Iran \\ \email{ghari@ipm.ir}
}
\maketitle

\begin{abstract}
Justification Logics provide a framework for reasoning about justifications and evidences. Most of the accounts of
justification logics are crisp in the sense that agent's justifications for a statement is convincing or is not. In this paper,
we study fuzzy variants of justification logics, in which an agent can have a justification for a statement with a certainty degree between 0 and 1. We replaced the classical base of the justification logics with some known fuzzy logics: Hajek's basic logic, {\L}ukasiewicz logic, G\"{o}del logic, product logic, and rational Pavelka logic. In all of the resulting systems we introduced fuzzy models (fuzzy possible world semantics with crisp accessibility relation) for our systems, and established the soundness theorems. In the extension of rational Pavelka logic we also proved a graded-style completeness theorem.
\end{abstract}
{\bf Keywords:} Justification logic, Fuzzy logic, Rational Pavelka logic.\\

\section{Introduction}
Justification logics is a family of logics used to reason about justifications and reasons for our belief or knowledge. Justification logics began with Artemov's logic of proofs \cite{A1995,A2001}, however various systems for justification logics have been introduced in the literature (see e.g. \cite{A2008,KuznetsStuder2012}). Justification logics are extensions of classical propositional logic by expressions of the form $t:A$, where $A$ is a formula and $t$ is a justification term. Regarding epistemic models of justification logics (cf. \cite{Fitting2005,Mkrtychev1997}), the formula $t:A$ is interpreted  as ``$t$ is a justification (or evidence) for $A$", and some of the justification logics also enjoy arithmetical interpretation of the operator ``$:$", in which $t:A$ can be read as ``$t$ is a proof of $A$ in Peano arithmetic." Note that mathematical proofs are certain justifications for theorems. But in real life reasoning we often deal with uncertain justifications.

It seems the only work that addresses uncertain justifications for belief is Milnikel's logic of uncertain justifications  \cite{Milnikel2014}. He replaced the justification assertions $t:A$ with $t:_r A$, where $r\in\mathbb{Q}\cap(0,1]$, with the intended meaning ``I have (at least) degreee $r$ of confidence in the reliability of $t$ as evidence for belief in $A$." Nevertheless, Milnikel's logic of uncertain justifications has (two-valued) classical propositional logic as its logical part. We think that the true logic of uncertain justifications should be based on fuzzy logics. fuzzy logics are well-known frameworks with the aim of inference under vagueness.\footnote{In this paper, we use the term fuzzy logic in the \textit{narrow sense} distinguished by Zadeh in \cite{Zadeh1994} (from the \textit{broad sense}), namely fuzzy logic is a logical system which aims at a formalization of approximate reasoning.} 

 This paper tries to give a synthesis of justification logic and fuzzy logic (both systems are considered in the propositional level in this paper). We extended justification logic and fuzzy logic to \textit{fuzzy justification logic}, a framework for reasoning on justifications under vagueness.   We replaced the classical base of the justification logic with some known fuzzy logics: Hajek's basic logic, {\L}ukasiewicz logic, G\"{o}del logic, product logic, and rational Pavelka logic (cf. \cite{Hajek1998}). In all of the resulting systems we introduced fuzzy models (fuzzy possible world semantics with crisp accessibility relation) for our systems, and established the soundness theorems. In the extension of rational Pavelka logic we also proved a graded-style completeness theorem.
  
 Our systems are \textit{t-norm based}, that means given an arbitrary t-norm all connectives have associated truth degreee functions which are defined from that t-norm (cf. \cite{Gottwald2000}). The language of fuzzy justification logics includes expressions of the form $t:A$, but with a different interpretation. The formula $t:A$ can be interpreted as ``$t$ is an evidence for believing $A$ with a certainty degree taken form the interval $[0,1]$." In other words, $t$ may be an evidence for $A$ that is not fully convincing and is only plausible to some extent. This allows us to reason about those facts that we do not have complete information or knowledge about them. Moreover, in fuzzy justification logics which are extensions of rational Pavelka logic (see Section \ref{sec:Fuzzy justification logics with truth constants}) we are able to define justification assertions of the form $t:_r A$, $t:^r A$, and $t \overset{r}{:} A$ that can be read respectively as
 ``$t$ is a justification for believing $A$ with certainty degree at least $r$",
 ``$t$ is a justification for believing $A$ with  certainty degree at most $r$", and
 ``$t$ is a justification for believing $A$ with certainty degree $r$."

The paper is organized as follows. In Section \ref{sec:Basic justification logic}, we review the axioms and rules of the basic justification logic, and its semantics. In Section \ref{sec:T-norm based fuzzy logics}, we review various t-norm based fuzzy logics. In Section \ref{sec:T-norm based justification logics}, we introduce basic fuzzy justification logic and its fuzzy models. We also introduce {\L}ukasiewicz, G\"{o}del, and product justification logics and their models.  In Section \ref{sec:Fuzzy justification logics with truth constants} we will develop a fuzzy justification logic whose language contains truth constants, and prove soundness and graded-style completeness theorems.
%%%%%%%%%%%%%%%%%%%%%%%%%%%%%%%%%%%%%%%%%%%%%%%%%%%%%%%%%%%%%%%%%55
\section{Basic justification logic}\label{sec:Basic justification logic}
In this section, we recall the axiomatic formulation of basic
justification logic {\sf J} and its semantics. The language of $\J$ is an extension of the language of propositional logic by expressions of the form $t:A$, where $A$ is a formula and $t$ is a
justification term. \textit{Justification terms} are built up from
justification variables $x_1, x_2, x_3, \ldots$ and justification
constants $c,c_1,c_2, \ldots$ using binary operations `$\cdot$' and `+'. More formally, justification terms and formulas of $\J$ are formed
by the following grammar:
\[ t::= x_i~|~c_i~|~t\cdot t~|~t+t,\]
\[ A::= p~|~\bot~|~A\rightarrow A~|~t:A,\]
where $p$ is a propositional variable and $\bot$ is the truth constant  falsity.  The set of all propositional variables, all terms and all formulas of $\J$ are denoted respectively by $\mathcal{P}$, $Tm$, and $Fm_\J$.

Next we describe the axioms and rules of \J.
\begin{definition}\label{def: axioms of J}
 Axiom schemes of {\sf J} are:
\begin{description}
\item[PC.] Finite set of axiom schemes for classical propositional calculus,
\item[Appl.]  $s:(A\r B)\r(t:A\r (s\cdot
t):B)$, 
\item[Sum.]  $s:A\r (s+t):A~,~s:A\r (t+s):A$,
\end{description}
{\bf Rules of inference}:
\begin{description}
\item[MP.] \textit{Modus Ponens}, from $\vd A$ and $\vd A\r B$,
infer $\vd B$.
 \item[IAN.] \textit{Iterated Axiom Necessitation}, $\vd c_{i_n}:c_{i_{n-1}}:\ldots:c_{i_1}:A$, where $A$ is an axiom instance of \J, $c_{i_j}$'s are
justification constants and $n\geq 1$.
\end{description}
\end{definition}

We now proceed to the definition of
\textit{Constant Specifications}.

\begin{definition}
A \textit{constant specification} $\CS$ for $\J$ is a set of formulas of
the form $c_{i_n}:c_{i_{n-1}}:\ldots:c_{i_1}:A$ ($n\geq 1$), where $c_{i_j}$'s are
justification constants and $A$ is an axiom instance of $\J$, such that it is downward closed: if $c_{i_n}:c_{i_{n-1}}:\ldots:c_{i_1}:A\in\CS$, then $c_{i_{n-1}}:\ldots:c_{i_1}:A\in\CS$.
\end{definition}
 Let ${\sf J}_\CS$ be the fragment of ${\sf J}$ where
the Iterated Axiom Necessitation rule only produces formulas
from the given $\CS$. 

In the rest of this section we introduce Fitting models for {\sf J}.
\begin{definition}\label{def:F-model J}
A Fitting model $\M=(\W,\R,\E,\V)$ for justification
logic ${\sf J}_\CS$ (or an $\J_\CS$-model for short) consists of a non-empty set of possible worlds $\W$, an accessibility relation $\R$ which is a binary relation on $\W$, i.e. $\R\subseteq \W\times\W$, a truth valuation $\V:\W\times\mathcal{P} \rightarrow \{0,1\}$ and an admissible evidence function $\E:\W\times Tm\times Fm_\J\r \{0,1\}$ meeting the following conditions (we use the notation $\E_w(t,A)$ instead of $\E(w,t,A)$):
\begin{description}
 \item[$\E 1.$] If $\E_w(s,A\r B)=1$ and $\E_w(t,A)=1$, then $\E_w(s\cdot t,B)=1$.
 \item[$\E 2.$] If $\E_w(s,A)=1$, then $\E_w(s+t,A)=\E_w(t+s,A)=1$.
  \item[$\E 3.$]  If $c:F\in\CS$, then $\E_w(c,F)=1$.
\end{description}
The truth valuation $\V$ extends uniquely to all formulas, i.e. $\V:\W\times Fm_\J \rightarrow \{0,1\}$, as follows (we use the notation $\V_w(A)$ instead of $\V(w,A)$):
\begin{enumerate}
\item $\V_w(\bot)=0$,
\item $\V_w(A\r B)=1$ if{f} $\V_w(A)=0$ or $\V_w(B)=1$,
 \item $\V_w(t:A)=1$ if{f} $\E_w(t,A)=1$ and for every $v\in \W$ with $w \R v$, $\V_v(A)=1$.
\end{enumerate}
We say that a formula $A$  is $\J_\CS$-valid (denoted by
$\J_\CS\Vdash A$) if for every $\J_\CS$-model $\M=(\W,\R,\E,\V)$ and every $w\in\W$ we have  $\V_w(A)=1$.
\end{definition}

The intended meaning of the admissible evidence function $\E$ is as follows: $\E_w(t,A)=1$ means that $t$ is an admissible justification for $A$ in $w$. Thus condition $\V3$ means: the formula $t:A$ is true in a world $w$ of a model $\M$ if and only if $t$ is an admissible justification for $A$ in $w$, and $A$ is true in all $w$-accessible worlds. 

The proof of the following theorem can be found in \cite{A2008}.
\begin{theorem}\label{M-completeness}
For a given constant specification $\CS$, the basic justification logic
${\sf J}_\CS$ is sound and complete with respect to their $\J_\CS$-models, i.e., ${\sf J}_\CS\vd A$ if{f} $\J_\CS\Vdash A$.
 \end{theorem}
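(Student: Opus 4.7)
The plan is to prove the two directions separately, following the standard recipe for modal completeness adapted to justification logic. For \textbf{soundness} I would proceed by induction on the length of an $\J_\CS$-derivation, showing that every provable formula is $\J_\CS$-valid. The propositional axioms are handled by the classical behavior of $\V_w$ on $\bot$ and $\r$. For \textbf{Appl}, assuming $\V_w(s:(A\r B))=1$ and $\V_w(t:A)=1$, condition $\E 1$ gives $\E_w(s\cdot t,B)=1$, and for any $v$ with $w\R v$ the clause for $:$ yields $\V_v(A\r B)=\V_v(A)=1$, hence $\V_v(B)=1$, so $\V_w((s\cdot t):B)=1$; the two \textbf{Sum} axioms are analogous using $\E 2$. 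Modus Ponens is immediate. For \textbf{IAN} applied to an axiom $A$, I would argue by a secondary induction on $n$: validity of $A$ plus $\E 3$ (applicable because $\CS$ is downward closed, so every prefix $c_{i_k}:\ldots:c_{i_1}:A$ lies in $\CS$) gives $\V_w(c_{i_1}:A)=1$ uniformly in $w$, and iterating produces validity of the whole stack.

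For \textbf{completeness} I would build a canonical model. Take $\W$ to be the set of all maximal $\J_\CS$-consistent sets of formulas (existence via a Lindenbaum construction using finitary provability in $\J_\CS$), set $w\R v$ iff $\{A \mid \exists t,\, t{:}A\in w\}\subseteq v$, let $\V_w(p)=1$ iff $p\in w$, and define $\E_w(t,A)=1$ iff $t{:}A\in w$. I would verify that $\E$ satisfies $\E 1$--$\E 3$ by appeal to Appl, Sum and IAN respectively together with maximal consistency (e.g.\ for $\E 1$: if $s{:}(A\r B), t{:}A\in w$, then Appl and MP force $(s\cdot t){:}B\in w$).

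The heart of the argument is the \textbf{Truth Lemma}: $\V_w(A)=1$ iff $A\in w$, proved by induction on $A$. Propositional cases are routine. For $A = t{:}B$, the left-to-right direction follows because $\V_w(t{:}B)=1$ forces $\E_w(t,B)=1$, which by construction is exactly $t{:}B\in w$. For the right-to-left direction, if $t{:}B\in w$ then $\E_w(t,B)=1$ directly, and for any $v$ with $w\R v$ we have $B\in v$ by definition of $\R$, so by the induction hypothesis $\V_v(B)=1$; both clauses being met, $\V_w(t{:}B)=1$. Completeness then follows in the usual way: if $\J_\CS\nvdash A$, extend $\{\neg A\}$ (or rather $\{A\r\bot\}$) to a maximal consistent $w$, whence $\V_w(A)=0$.

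The main obstacle I would expect is the $\r$-direction of the Truth Lemma's $t{:}B$ case: one must ensure the canonical accessibility relation $\R$ is defined tightly enough that $w\R v$ delivers $B\in v$ whenever $t{:}B\in w$, yet loosely enough that sufficiently many $v$'s exist. Fortunately the chosen definition $w\R v \iff \{B \mid \exists t,\, t{:}B\in w\}\subseteq v$ collapses the existential witness issue, because the clause for $t{:}B$ in Definition~\ref{def:F-model J} already separates the evidence condition (handled by $\E$) from the universal modal condition (handled by $\R$), so no extra bookkeeping on justification terms is needed. Verifying $\E 3$ for iterated constants also requires the downward closure of $\CS$, exactly as in the soundness argument for IAN.
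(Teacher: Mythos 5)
Your proposal is correct and follows essentially the standard argument that the paper itself does not reproduce but delegates to Artemov \cite{A2008}: soundness by induction on derivations (with downward closure of $\CS$ handling IAN), and completeness via a canonical model over maximal consistent sets with $w\R v$ iff $\{A \mid t{:}A\in w\}\subseteq v$ and $\E_w(t,A)=1$ iff $t{:}A\in w$. This is also exactly the construction the paper later adapts for the canonical model of $\F_\CS$ in Definition~\ref{def:canonical model}, so nothing further is needed.
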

 
Among various papers on the justification logics, it seems the only work that addresses uncertain justifications for belief is Milnikel's logic of uncertain justifications $\J^U$ (cf. \cite{Milnikel2014}). He replaced the justification assertions $t:A$ with $t:_r A$, where $r\in\mathbb{Q}\cap(0,1]$, with the intended meaning ``I have (at least) degreee $r$ of confidence in the reliability of $t$ as evidence for belief in $A$." Milnikel considered the following principles:
\begin{enumerate}
\item $s:_r (A\r B)\r(t:_{r'} A \r s\cdot t:_{r\cdot r'} B)$.
\item $s:_r A\r s+t:_r A$, $s:_r A\r t+s:_r A$.
\item $t:_{r'} A \r t:_r A$, where $r \leq r'$.
\end{enumerate}
along with the rule IAN with the conclusion: $\vd c_{i_n}:_1 c_{i_{n-1}}:_1 \ldots:_1 c_{i_1}:_1 A$. Note that $\J^U$ is based on the classical propositional logic. Although axioms and rules of Milnikel's logic is plausible, we think that the true logic of uncertain justifications is based on fuzzy logics. This is the approach we take in the following sections. Particularly, in Section \ref{sec:Fuzzy justification logics with truth constants} we will develop a fuzzy justification logic whose language contains truth constants, and show that all the principles of $\J^U$ is valid in it.
%%%%%%%%%%%%%%%%%%%%%%%%%%%%%%%%%%%%%%%%%%%%%%%%%%%%%%%%%%%%%%%%%%%%%%%
\section{T-norm based fuzzy logics}\label{sec:T-norm based fuzzy logics}
In this section we recall the Hajek's fuzzy basic logic {\sf BL} from \cite{Hajek1998}.\footnote{For a more detailed exposition of other fuzzy logics consult \cite{MOG2009}.} Formulas of {\sf BL} are built by the following grammar:
\[ A::= p~|~\bar{0}~|~A\& A~|~A\rightarrow A,\]
where $p\in\mathcal{P}$, $\&$ is the strong conjunction, and $\bar{0}$ is the truth constant for falsity. Further connectives are defined as follows:
\begin{eqnarray*}
  \neg A &:=& A\r \bar{0} \\  
  A\wedge B &:=& A\& (A\r B) \\
  A\vee B &:=& ((A\r B)\r B)\wedge((B\r A)\r A) \\
  A \equiv B &:=& (A\r B)\& (B\r A) \\
  A \leftrightarrow B &:=& (A\r B)\wedge(B\r A) 
 \end{eqnarray*}

\begin{definition}\label{def:axioms BL}
Axiom schemes of {\sf BL} are:
\begin{description}
  \item[BL1.] $(A\r B)\r((B\r C)\r(A\r C))$
  \item[BL2.] $(A\& B)\r A$
  \item[BL3.] $(A\& B)\r(B\& A)$
  \item[BL4.] $(A\&(A\r B))\r(B\&(B\r A))$
  \item[BL5a.] $(A\r(B\r C))\r((A\& B)\r C)$
  \item[BL5b.] $((A\& B)\r C)\r(A\r(B\r C))$
  \item[BL6.] $((A\r B)\r C)\r(((B\r A)\r C)\r C)$
  \item[BL7.] $\bar{0}\r A$
\end{description}
Modus Ponens is the only inference rule.
\end{definition}

We list here some theorems of {\sf BL} that are useful in later sections (for proofs cf. \cite{Hajek1998}).

\begin{lemma}\label{lem:theorems of BL}
The following are theorems of {\sf BL}:
\begin{enumerate}
\item $\bar{1}$.
\item $A \r (B\r A)$.
\item $(A\& B )\r (A \wedge B)$.
\item $A\wedge B \r A$.
\item $(A\r B) \r (A \r (A\wedge B))$.
\item $(A\r(B\r C))\r (B\r (A\r C))$.
\item $((A_1 \r B_1) \& (A_2 \r B_2)) \r ((A_1 \& A_2) \r (B_1 \& B_2))$.
\item $(A\r B)\vee (B\r A)$.
\end{enumerate}
\end{lemma}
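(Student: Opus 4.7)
The plan is to derive all eight formulas from the axiom schemes BL1--BL7 by repeated Modus Ponens, proving them in a convenient order so that later derivations may cite earlier ones. Two preliminaries are useful: self-implication $\vd A\r A$ (a standard consequence of BL1 combined with BL5a/BL5b, from which (1) follows by unfolding $\bar{1}:=\neg\bar{0}$ as $\bar{0}\r\bar{0}$), and disjunction-introduction $\vd X\r (X\vee Y)$, needed for (8).

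Items (2) and (4) are almost immediate from the axioms. For (2) I would apply BL5b to BL2 with $C:=A$: from $(A\& B)\r A$ one obtains $A\r(B\r A)$ in a single Modus Ponens step. For (4) the definition $A\w B := A\& (A\r B)$ reduces the statement to $(A\& (A\r B))\r A$, which is BL2 directly. Items (6) and (7) are exercises in the ``import/export'' equivalence between $(A\& B)\r C$ and $A\r(B\r C)$ supplied by BL5a/BL5b together with commutativity BL3: for (6) one chains BL5a, then BL3 (bridged by a cut via BL1), then BL5b; for (7), the most tedious bookkeeping step, one applies BL5a twice to move both antecedents outside, pairs up the hypotheses using BL3, and collapses back with BL5b. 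With (7) available, (3) falls out by applying this monotonicity of $\&$ to the identity $A\r A$ and to the weakening $B\r(A\r B)$ from (2), converting $A\& B$ into $A\& (A\r B)=A\w B$. For (5), using BL5b in reverse it suffices to prove $((A\r B)\& A)\r(A\& (A\r B))$, which is an instance of BL3.

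The main obstacle is (8), the prelinearity theorem, which is precisely the principle that axiom BL6 was designed to deliver. The plan is to instantiate BL6 with $C:=(A\r B)\vee(B\r A)$: the two antecedents $(A\r B)\r C$ and $(B\r A)\r C$ are then instances of disjunction-introduction, so two applications of Modus Ponens on this BL6-instance yield $C$, which is (8). The delicate subtask is verifying $\vd X\r (X\vee Y)$ from the awkward definition $X\vee Y:=((X\r Y)\r Y)\w((Y\r X)\r X)$; for this I would use (5) together with self-implication to obtain $X\r ((X\r Y)\r Y)$ (the first conjunct follows because $X$ and $X\r Y$ together give $Y$ via BL5a/MP), an analogous derivation for the second conjunct, and (7) to combine the two conjuncts inside a conjunction. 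Once this lemma is in place the rest of (8) is mechanical.
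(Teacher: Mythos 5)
The paper itself gives no proof of this lemma --- it simply refers the reader to H\'{a}jek's book --- so your derivation has to be judged on its own. Items (1)--(7) are essentially fine: (2) is BL2 plus BL5b, (4) is BL2 once $\wedge$ is unfolded, (5) is BL5b applied to the BL3-instance $((A\r B)\& A)\r(A\&(A\r B))$, (6) is the BL5a/BL3/BL5b chain glued by BL1, (7) is the standard (if tedious) import--export computation, and (3) follows from (7) applied to $A\r A$ and $B\r(A\r B)$, using the $\&$-introduction $X\r(Y\r(X\& Y))$ (BL5b on self-implication) to conjoin two theorems. One small slip: $X\r((X\r Y)\r Y)$ comes from permutation, i.e.\ item (6) applied to $(X\r Y)\r(X\r Y)$, not from item (5).

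The genuine gap is in item (8), at the step you describe as ``(7) to combine the two conjuncts.'' Write $P:=(X\r Y)\r Y$ and $Q:=(Y\r X)\r X$, so that $X\vee Y=P\wedge Q=P\&(P\r Q)$. You have $\vd X\r P$ and $\vd X\r Q$ and need $\vd X\r(P\&(P\r Q))$. But item (7) only yields $((X\r P)\&(X\r(P\r Q)))\r((X\& X)\r(P\&(P\r Q)))$, so after Modus Ponens you are stuck with $(X\& X)\r(X\vee Y)$; discharging the doubled antecedent requires $X\r(X\& X)$, which is exactly the G\"{o}del contraction axiom $A\r(A\& A)$ and is \emph{not} a theorem of {\sf BL} (it fails under the {\L}ukasiewicz t-norm). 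Tellingly, your plan never uses BL4, yet BL4 (divisibility) is precisely what makes the defined $\wedge$ a meet, and it is unavoidable here. A correct route: from $\vd X\r P$ and item (5), $\vd X\r(X\wedge P)$; BL4 gives $(X\wedge P)\r(P\&(P\r X))$; from $\vd X\r Q$ and prefixing, $(P\r X)\r(P\r Q)$, so (7) applied with $P\r P$ in the first slot gives $(P\&(P\r X))\r(P\&(P\r Q))$; chaining by BL1 yields $\vd X\r(X\vee Y)$, and symmetrically $\vd Y\r(X\vee Y)$. With these two facts your BL6 instantiation does close the argument.
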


The definition of the classical valuations can easily be extended to a many-valued realm. The standard set of truth degreees is the real interval $[0,1]$.  Now a truth valuation is a mapping $\V:\mathcal{P} \rightarrow [0,1]$. In order to define the truth functions of the strong conjunction $\&$ and implication $\r$ we need the notion of triangular norm (or t-norm).
A binary operation $*$ on the interval [0,1] is a t-norm if it is commutative, associative, non-decreasing and 1 is its unit element. 
\begin{definition}
A \textit{t-norm } is a binary operation $*: [0,1]^2 \r
[0,1]$ satisfying the following conditions. For all $x,y,z \in [0,1]$:
\begin{itemize}
    \item $x*y=y*x$, 
    \item $(x*y)*z=x*(y*z)$,
    \item $x\leq y$ implies $x*z \leq y*z$
    \item $1*x=x$.
\end{itemize}
$*$ is a \textit{continuous} t-norm if it is a t-norm and is a
continuous mapping of $[0,1]^2$ into $[0,1]$.
\end{definition}

\begin{example}\label{example: well-known t-norms}
Well-known examples of continuous t-norms are the followings (where $x,y \in [0,1]$):
\begin{enumerate}
    \item \textit{{\L}ukasiewicz t-norm}: $x*_L y=\max(0,x+y-1)$
    \item \textit{G\"{o}del t-norm}: $x*_G y=\min(x,y)$
    \item \textit{Product t-norm}: $x*_P y=x\cdot y$
\end{enumerate}
\end{example}
It is well-known that (see eg. \cite{Hajek1998}) each continuous t-norm is a combination of {\L}ukasiewicz, G\"{o}del and product t-norms (in a sense).

\begin{definition}
A residuum $\Rightarrow$ of a continuous t-norm $*$ is defined as follows:\footnote{In fact this is well-defined only if the t-norm $*$ is left-continuous.}
 \[x \Rightarrow y = \max\{z| x*z \leq y\},\]
 where $x,y \in [0,1]$.
\end{definition}

From the definition of residuum, for all $x,y,z \in [0,1]$, we have
\[ x*z \leq y ~~\mbox{if{f}}~~ z \leq x \Rightarrow y ~~\mbox{if{f}}~~ x \leq z\Rightarrow y.\] 

As it is shown in \cite{Hajek1998}, for each continuous t-norm $*$ and its residuum $\Rightarrow$, the following holds:
\[ x \Rightarrow y =1 ~\mbox{if{f}}~ x \leq y,\]
for all $x,y \in [0,1]$.

\begin{example}\label{example: well-known residuums}
The residuum of continuous t-norms of Example \ref{example: well-known t-norms} are the followings (where $x,y \in [0,1]$): 
\begin{enumerate}
    \item \textit{{\L}ukasiewicz implication}:  
    \[ x\Rightarrow_L y = \left\{ \begin{array}{ll}
                    1 & , x\leq y \\
                    1-x+y & , x> y
                    \end{array}
              \right.      \]
              or, $x \Rightarrow_L y= \min(1, 1-x+y)$.
    \item \textit{G\"{o}del implication}:  
    \[ x\Rightarrow_G y = \left\{ \begin{array}{ll}
                    1 & , x\leq y \\
                    y & , x> y
                    \end{array}
              \right.      \]
    \item \textit{Product (or Goguen) implication}: 
     \[ x\Rightarrow_P y = \left\{ \begin{array}{ll}
                    1 & , x\leq y \\
                    y/x & , x> y
                    \end{array}
              \right.      \]
\end{enumerate}
\end{example} 
%   Note that
%  \[ x\Rightarrow y =1 ~~~\mbox{if{f}}~~~ x\leq y.\]
Note that the only continuous implication of Example \ref{example: well-known residuums} is the {\L}ukasiewicz implication.

The following lemma is useful in later sections (we leave it to the reader to verify the details of the proof).
\begin{lemma}\label{lem:properties of L-implication}
For all $x,x',y,y'\in [0,1]$, we have:
\begin{enumerate}
\item If $x' \leq x$, then $x\Rightarrow_L y \leq x'\Rightarrow_L y$.
\item If $y \leq y'$, then $x\Rightarrow_L y \leq x\Rightarrow_L y'$.
\item $ (x \Rightarrow_L x') *_L (y \Rightarrow_L y')\leq (x*_L y) \Rightarrow_L (x' *_L y')$.
\end{enumerate}
\end{lemma}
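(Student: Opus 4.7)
The plan is to prove items (1) and (2) directly from the closed form $x \Rightarrow_L y = \min(1, 1-x+y)$ given in Example~\ref{example: well-known residuums}, and to derive item (3) from the residuation property $x *_L z \leq y$ iff $z \leq x \Rightarrow_L y$ recalled just above, together with monotonicity of the t-norm.

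For (1), assuming $x' \leq x$, I observe that $1-x+y \leq 1-x'+y$, and since $\min(1,\cdot)$ is monotone in its second argument, $\min(1,1-x+y) \leq \min(1,1-x'+y)$, which is exactly $x \Rightarrow_L y \leq x' \Rightarrow_L y$. Item (2) is completely analogous: from $y \leq y'$ we have $1-x+y \leq 1-x+y'$, hence $x \Rightarrow_L y \leq x \Rightarrow_L y'$. No case analysis on whether $x \leq y$ is needed once one works with the $\min$ form throughout.

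For (3), the idea is to apply the adjunction in the reverse direction. To establish
\[(x \Rightarrow_L x') *_L (y \Rightarrow_L y') \leq (x *_L y) \Rightarrow_L (x' *_L y'),\]
it suffices, by the adjunction, to show
\[(x \Rightarrow_L x') *_L (y \Rightarrow_L y') *_L (x *_L y) \leq x' *_L y'.\]
Rearranging using commutativity and associativity of $*_L$, the left-hand side equals $\bigl[(x \Rightarrow_L x') *_L x\bigr] *_L \bigl[(y \Rightarrow_L y') *_L y\bigr]$. The standard \emph{modus ponens} inequality $(a \Rightarrow_L b) *_L a \leq b$, itself immediate from the adjunction applied to $a \Rightarrow_L b \leq a \Rightarrow_L b$, gives $(x \Rightarrow_L x') *_L x \leq x'$ and $(y \Rightarrow_L y') *_L y \leq y'$, so monotonicity of $*_L$ yields the desired bound.

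The main obstacle is essentially notational: one must pick the correct direction of the adjunction to convert (3) into a clean chain of t-norm inequalities. A brute-force proof by case analysis on the signs of $x - x'$ and $y - y'$ (up to four cases) using the two-clause definition of $\Rightarrow_L$ is possible but significantly messier, so the residuation route is clearly preferable. Nothing in items (1) or (2) requires more than monotonicity of $\min$.
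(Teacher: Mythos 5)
Your proof is correct. Note that the paper itself gives no proof of this lemma (it explicitly leaves the verification to the reader), so there is no official argument to compare against; your computations for (1) and (2) from the closed form $\min(1,1-x+y)$ and your residuation argument for (3) are all valid, and the latter in fact only uses the adjunction $x*z\leq y$ iff $z\leq x\Rightarrow y$ together with commutativity, associativity and monotonicity of the t-norm, so it establishes (3) for an arbitrary continuous t-norm, not just the {\L}ukasiewicz one.
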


Now suppose $*$ is an arbitrary t-norm. One can construct a t-norm based\footnote{A many valued logic is called \textit{t-norm based} if the truth function of all its connectives are defined from a particular t-norm, using possibly some truth constants (see e.g. \cite{Gottwald2000}).} propositional calculus $PC(*)$ (in the same language of ${\sf BL}$) and interpret the strong conjunction $\&$ and implication $\r$ as t-norm $*$ and its residuum $\Rightarrow$.  Now a truth valuation $\V$ (relative to $*$) recan be extended to all formulas as follows:
\begin{eqnarray*}
\V(\bar{0}) &=& 0.\\
\V(A\r B)&=& \V(A)\Rightarrow \V(B).\\
\V(A\& B)&=&  \V(A)* \V(B).
\end{eqnarray*}

It can be shown that 
\begin{eqnarray*}
\V(A\wedge B)&=& \min(\V(A), \V(B)).\\
\V(A\vee B)&=&  \max(\V(A), \V(B)).
\end{eqnarray*}

\begin{definition}
A formula $A$ is a 1-tautology of $PC(*)$ if{f} $\V(A)=1$ for each truth valuation $\V$ of $PC(*)$. A formula $A$ is a t-tautology if{f} $\V(A)=1$ for each truth valuation $\V$ and each continuous t-norm $*$.
\end{definition}

It is shown in \cite{Hajek1998} that every theorem of {\sf BL} is a t-tautology, the converse (completeness of {\sf BL}) is shown in \cite{CEGT2000}. Thus {\sf BL} is a common base of all the logics $PC(*)$. 

\begin{theorem}\label{thm:completeness BL}
A formula $F$ is provable in {\sf BL} if{f} it is a t-tautology.
 \end{theorem}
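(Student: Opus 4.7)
The proof splits into soundness and completeness. For soundness, I would proceed by induction on the length of derivations in {\sf BL}. The base case requires showing each axiom scheme BL1--BL7 is a t-tautology, which means verifying that the corresponding valuation inequality holds under every truth valuation $\V$ and every continuous t-norm $*$. These verifications reduce to manipulations of the adjointness property $a * b \leq c$ iff $b \leq a \Rightarrow c$, together with commutativity, associativity and monotonicity of $*$. The two nontrivial axioms are BL4, which encodes the divisibility identity $\min(x,y) = x * (x \Rightarrow y)$ satisfied by every continuous t-norm, and BL6, whose 1-validity relies on the prelinearity $(x \Rightarrow y) \vee (y \Rightarrow x) = 1$ holding on any linearly ordered residuated structure. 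For the inductive step, Modus Ponens preserves 1-validity because $\V(A) = 1$ and $\V(A) \Rightarrow \V(B) = 1$ force $\V(A) \leq \V(B)$ and hence $\V(B) = 1$.

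For completeness, the plan is the standard algebraic route. First I would introduce the variety of BL-algebras (prelinear divisible bounded integral commutative residuated lattices) and perform a Lindenbaum--Tarski construction on {\sf BL} to obtain general algebraic completeness: $\vd_{\sf BL} A$ iff $A$ evaluates to $1$ in every BL-algebra under every assignment. Then apply the subdirect representation theorem, which, combined with prelinearity, lets us further reduce to BL-chains (linearly ordered BL-algebras). So if $F$ is not provable in {\sf BL}, there exists a countable BL-chain $L$ and a valuation into $L$ sending $F$ to some value strictly less than $1$.

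The main obstacle, and the deepest step, is transferring the counterexample from an abstract BL-chain to a standard algebra $([0,1], *, \Rightarrow)$ coming from a continuous t-norm. This is the content of \cite{CEGT2000} and proceeds via the structure theorem describing every continuous t-norm as an ordinal sum of {\L}ukasiewicz, G\"odel and product components indexed by the idempotents of $*$. One shows that every countable BL-chain embeds into a standard BL-algebra by inductively assigning to each interval between consecutive idempotents an appropriate Wajsberg, G\"odel or product component and gluing them into a t-norm on $[0,1]$. Once the embedding is available, the falsifying assignment on $L$ pushes forward to a valuation on $[0,1]$ that falsifies $F$ under some continuous t-norm, contradicting the assumption that $F$ is a t-tautology and completing the proof.
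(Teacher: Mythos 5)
This theorem is not proved in the paper: it is quoted from the literature, with soundness attributed to \cite{Hajek1998} and standard completeness to \cite{CEGT2000}, and your outline is a faithful summary of exactly those cited arguments (induction on derivations using residuation, divisibility for BL4 and prelinearity for BL6; then Lindenbaum--Tarski algebraic completeness, reduction to countable BL-chains via subdirect representation, and the embedding of such chains into standard algebras of continuous t-norms through the ordinal-sum decomposition). So your proposal is correct and takes essentially the same route as the sources the paper relies on.
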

 
In the next definition, axiom systems of $PC(*)$ for t-norms of Example \ref{example: well-known t-norms} are given (cf. \cite{Hajek2010}).

\begin{definition}\label{def:axiomatic L-G-Pi}
From the basic logic {\sf BL} (in the same language of {\sf BL}) one gets a complete axiomatization of: 
\begin{enumerate}
\item the {\L}ukasiewicz logic {\sf {\L}} if one adds the axiom scheme:
\begin{equation}\label{eq:Lukasiewicz axiom}
\neg\neg A \r A \tag{L}
\end{equation}
\item the G\"{o}del  logic {\sf G} if one adds the axiom scheme 
\begin{equation}\label{eq:Godel axiom}
A\r (A\& A) \tag{G}
\end{equation}
\item  the product logic ${\sf \Pi}$ if one adds the axiom scheme
\begin{equation}\label{eq:Product axiom}
\neg\neg A \r ((A \r (A \& B)) \r (B \& \neg\neg B)) \tag{P}
\end{equation}
\end{enumerate}
\end{definition}

\begin{definition}
Let {\sf L} denote either ${\sf {\L}}$, {\sf G}, or ${\sf \Pi}$. A formula $A$ is a 1-tautology of ${\sf L}$ if{f} $\V(A)=1$ for each truth valuation $\V$ of ${\sf L}$. 
\end{definition}

\begin{theorem}\label{thm:completeness L-G-Pi}
Let {\sf L} denote either ${\sf {\L}}$, {\sf G}, or ${\sf \Pi}$. A formula $F$ is provable in ${\sf L}$ if{f} it is a 1-tautology of ${\sf L}$.
 \end{theorem}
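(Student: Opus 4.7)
The plan is the standard two-directional argument. Soundness (every provable formula of ${\sf L}$ is a 1-tautology of ${\sf L}$) is routine: one checks that each of the axioms (BL1)--(BL7) and the additional axiom of ${\sf L}$ --- namely (L), (G), or (P) --- evaluates to $1$ under every truth valuation based on the corresponding t-norm, and that modus ponens preserves 1-validity, since $\V(A)=1$ and $\V(A)\Rightarrow \V(B)=1$ force $\V(B)=1$ by the characterization of the residuum. By Theorem \ref{thm:completeness BL} we already know (BL1)--(BL7) are t-tautologies, so the only real work is a direct computation using the truth functions of Example \ref{example: well-known residuums} for the one extra axiom in each case.

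For the completeness direction I would adopt the algebraic route. Associated with each t-norm based logic there is a variety of algebras in which the logic is strongly algebraically complete: MV-algebras for ${\sf \L}$, G\"odel-algebras (i.e.\ prelinear Heyting algebras) for ${\sf G}$, and product algebras for ${\sf \Pi}$. Given a non-theorem $F$ of ${\sf L}$, form the Lindenbaum--Tarski algebra of ${\sf L}$. Using the additional axiom one verifies that this algebra lies in the appropriate variety, and that the equivalence class $[F]$ differs from $1$ there. This already yields completeness with respect to general (non-standard) algebraic semantics.

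The heart of the proof is then the passage from this algebraic completeness to standard completeness, i.e.\ the fact that each countable algebra in the relevant variety embeds, via a subdirect representation into chains, into the algebra on $[0,1]$ given by the corresponding standard t-norm. Concretely, each ${\sf L}$-algebra is a subdirect product of ${\sf L}$-chains, so it suffices to embed every countable ${\sf L}$-chain into the corresponding standard algebra. For ${\sf G}$ this is essentially a matter of order-embedding a countable linear order into $[0,1]$ while preserving $0$ and $1$, which is elementary. For ${\sf \L}$ one invokes Chang's representation of MV-chains together with the Mundici categorical equivalence between MV-algebras and lattice-ordered abelian groups with strong unit, producing an embedding into the standard MV-algebra on $[0,1]$. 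For ${\sf \Pi}$ one uses the analogous description of product chains as ordinal sums of a Boolean component with a cancellative (or MV-like) component, again yielding an embedding into the standard product algebra on $[0,1]$.

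The main obstacle is clearly the {\L}ukasiewicz case, where the embedding of an arbitrary (countable) MV-chain into the standard $[0,1]$-MV-algebra rests on the substantial Chang/Mundici machinery; the other two cases reduce to comparatively elementary order-theoretic and algebraic facts. Since this theorem is by now entirely classical and is developed in detail in H\'ajek's treatment, in practice I would simply cite the proofs in \cite{Hajek1998} and \cite{Hajek2010} rather than reproduce the argument in this paper.
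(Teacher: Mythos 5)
Your proposal is correct: it accurately outlines the standard argument --- soundness by direct evaluation of the extra axiom (L), (G), or (P) over the corresponding standard algebra, and completeness via the Lindenbaum--Tarski algebra, subdirect decomposition into chains, and embedding of countable chains into the standard algebra on $[0,1]$ (Chang/Mundici for {\L}ukasiewicz, an order-embedding for G\"odel, the ordinal-sum structure theorem for product). The paper itself offers no proof of this theorem; it is quoted as a known result with a pointer to \cite{Hajek2010} and \cite{Hajek1998}, so your closing plan of simply citing H\'ajek's treatment is exactly what the paper does.
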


Note that G\"{o}del  logic {\sf G} proves $A\& B \equiv A\wedge B$. Further,  {\sf G} is an extension of intuitionistic logic by axiom BL6, or by the linearity axiom $(A\r B)\vee(B\r A)$. Moreover, classical propositional logic is equivalent to either ${\sf {\L}}\cup{\sf G}$, or ${\sf \Pi}\cup{\sf G}$, or ${\sf {\L}}\cup{\sf \Pi}$ (cf. \cite{Hajek1998}).
%%%%%%%%%%%%%%%%%%%%%%%%%%%%%%%%%%%%%%%%%%%%%%%%%%%%%%%%%%%%%%%%%%%%%%%
\section{T-norm based justification logics}\label{sec:T-norm based justification logics}
In this section we introduce t-norm based justification logics. First we consider the basic justification logic {\sf J} on the bases of fuzzy basic logic {\sf BL} (instead  of classical propositional logic). We call the resulting system {\sf BLJ}. Then by adding axioms \ref{eq:Lukasiewicz axiom}, \ref{eq:Godel axiom}, and \ref{eq:Product axiom} from Definition \ref{def:axiomatic L-G-Pi} we obtain extensions of {\L}ukasiewicz, G\"{o}del and product logic. 

The language of {\sf BLJ} is an extension of the language of {\sf BL} by justification assertions $t:A$, where justification term $t$ is defined by the same grammar as in {\sf J}. Thus, formulas of {\sf BLJ} are built by the following grammar:
\[ A::= p~|~\bar{0}~|~A\& A~|~A\rightarrow A~|~t:A,\]
where $p\in\mathcal{P}$, and $t\in Tm$. Other connectives $\neg, \wedge, \vee, \equiv, \leftrightarrow$ are defined as in Section \ref{sec:T-norm based fuzzy logics}. $Fm_{\sf BLJ}$ denotes the set of all formulas of ${\sf BLJ}$.

{\sf BLJ} are axiomatized by adding axiom schemes Appl and Sum (from Definition \ref{def: axioms of J}) to axiom schemes of {\sf BL} (axioms BL1-BL7 from Definition \ref{def:axioms BL}). The rules of inference of {\sf BLJ} are the same as those of $\J$, i.e. MP and IAN (from Definition \ref{def: axioms of J}), with the difference that the axiom instance $A$ in the Iterated Axiom Necessitation rule should be now an axiom  instance of {\sf BLJ}.

A constant specification $\CS$ for {\sf BLJ} is a set of formulas of
the form $c_{i_n}:c_{i_{n-1}}:\ldots:c_{i_1}:A$ ($n\geq 1$), where $c_{i_j}$'s are
justification constants and $A$ is an axiom instance of {\sf BLJ}, such that it is downward closed. Let ${\sf BLJ}_\CS$ be the fragment of ${\sf BLJ}$ where
the Iterated Axiom Necessitation rule only produces formulas
from the given $\CS$. 

The definition of F-models for $\J$ can be easily extended to give models for {\sf BLJ}, by replacing the set of truth values $\{0,1\}$ to $[0,1]$. In fact, we define F-models for {\sf BLJ} in such a way that the truth valuation $\V$ and admissible evidence function $\E$ in many-valued logic {\sf BLJ} be a generalization of those in classical two-valued logic \J.

\begin{definition}\label{fuzzy M-model}
Let $*$ be a t-norm and $\Rightarrow$ be its residuum. A structure $\M_*=(\W,\R,\E,\V)$ (relative to $*$) is a fuzzy Fitting model for ${\sf BLJ}_\CS$ (or simply is a ${\sf BLJ}_\CS$-model), if $\W$ is a non-empty set of possible worlds, $\R$ is a (two-valued) accessibility relation on $\W$, $\V$ is a truth valuation of propositional variables in each world $\V:\W\times\mathcal{P}\r [0,1]$, and $\E$ is a fuzzy admissible evidence function $\E:\W\times Tm \times Fm_{\sf BLJ} \r [0,1]$ satisfying the following
conditions:\footnote{We continue to write $\E_w(t,A)$ for $\E(w,t,A)$, and $\V_w(A)$ for $\V(w,A)$.}
\begin{description}
 \item[$F\E 1.$] If $\E_w(s,A\r B)*\E_w(t,A)\leq\E_w(s\cdot t,B)$.
 \item[$F\E 2.$] $\E_w(s,A)\leq \E_w(t+s,A)$, $\E_w(s,A)\leq \E_w(s+t,A)$.
 \item[$F\E 3.$] If $c:F\in\CS$, then $\E_w(c,F)=1$, for every $w\in\W$.
\end{description}
The truth valuation $\V$ extends uniquely to all formulas of {\sf BLJ} as follows:
\begin{description}
\item[$\V1.$] $\V_w(\bar{0})=0$,
\item[$\V2.$] $\V_w(A\r B)= \V_w(A)\Rightarrow \V_w(B)$,
\item[$\V3.$] $\V_w(A\& B)= \V_w(A)* \V_w(B)$,
 \item[$\V4.$] $\V_w(t:A)=\E_w(t,A) * \V_w^{\Box}(A)$,
\end{description}
where
\[\V_w^\Box(A) = \inf\{ \V_v(A)~|~ w\R v\}.\]
A formula $A$ is ${\sf BLJ}_\CS$-valid if $\V_w(A)=1$, for every continuous t-norm $*$ and every ${\sf BLJ}_\CS$-model $\M_*=(\W,\R,\E,\V)$ and every $w\in\W$.
\end{definition}

Note that conditions $F\E1-F\E3$ in the above definition are generalizations of conditions $\E1-\E3$ of Definition \ref{def:F-model J}. The intended meaning of fuzzy admissible evidence function $\E$ is as follows: $t$ is an admissible evidence for $A$ in $w$ with the certainty degree $\E_w(t,A)$. Moreover, $\E_w(t,A)=1$ means that $t$ is an admissible evidence for $A$ in $w$ (as in the classical case). It seems that conditions $F\E 1$-$F\E 3$ are meaningful. For example, condition $F\E 2$ means that the certainty degree of $t+s$ (or $s+t$) for $A$ in $w$ is greater or at least equal to that of $t$ for $A$ in $w$. In other words, if we strengthen a justification (or reason) of a fact by adding another justification, then the certainty degree of our justification will be increased. 

The definition  of $\V_w^\Box$ indeed comes from the evaluation of modal formulas in many-valued modal logics with crisp accessibility relation (see e.g. \cite{Fitting1991,Priest2008})
\[\V_w(\Box A) = \inf\{ \V_v(A)~|~ w\R v\}.\]

Note that in many-valued modal logics it is not difficult to show that (see e.g. \cite{Priest2008})
\begin{equation}\label{eq:fuzzy modal e(K-axiom)}
\V_w(\Box(A\r B)) * \V_w(\Box A) \leq \V_w(\Box B)
\end{equation}
for every world $w$. This shows that the modal axiom K, $\Box(A\r B)\r(\Box A \r\Box B)$, is valid in many-valued modal logics with crisp accessibility relation. It is shown in \cite{BouEstevaGodo2007} that in general the axiom K is not valid in many-valued modal logics.

Next, we establish the soundness theorem for {\sf BLJ}.

\begin{theorem}\label{thm:soundness BLJ}
For any constant specification $\CS$ for {\sf BLJ}, if $F$ is provable in ${\sf BLJ}_\CS$, then $F$ is ${\sf BLJ}_\CS$-valid.
\end{theorem}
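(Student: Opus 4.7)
The proof goes by induction on the length of a derivation in ${\sf BLJ}_\CS$. Fix a continuous t-norm $*$ with residuum $\Rightarrow$, a ${\sf BLJ}_\CS$-model $\M_*=(\W,\R,\E,\V)$, and an arbitrary world $w\in\W$; it suffices to show $\V_w(F)=1$ for every derivable $F$. For the BL axioms \textbf{BL1}--\textbf{BL7}, observe that $\V_w$ restricted to the set of formulas of ${\sf BLJ}$ is a truth valuation of $PC(*)$ once we treat the formulas $t\!:\!A$ as ``atoms'' alongside propositional variables: clauses $\V 1$--$\V 3$ are precisely the semantic clauses of $PC(*)$. Since by Theorem \ref{thm:completeness BL} every instance of \textbf{BL1}--\textbf{BL7} is a t-tautology, it evaluates to $1$ under any such valuation, so $\V_w(F)=1$ for each BL-axiom instance $F$.

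For the justification axioms I handle \textbf{Appl} and \textbf{Sum} separately. For \textbf{Sum}, using $\V 4$ we have $\V_w(s\!:\!A)=\E_w(s,A)*\V_w^\Box(A)$ and $\V_w((s+t)\!:\!A)=\E_w(s+t,A)*\V_w^\Box(A)$, and condition $F\E 2$ together with monotonicity of $*$ gives $\V_w(s\!:\!A)\le\V_w((s+t)\!:\!A)$; since $x\Rightarrow y=1$ iff $x\le y$, this yields $\V_w(s\!:\!A\r(s+t)\!:\!A)=1$, and the $t+s$ case is symmetric. For \textbf{Appl}, by the residuation equivalence $x*y\le z$ iff $x\le y\Rightarrow z$ applied twice, it suffices to establish
\begin{equation*}
\E_w(s,A\r B)*\V_w^\Box(A\r B)*\E_w(t,A)*\V_w^\Box(A)\ \le\ \E_w(s\cdot t,B)*\V_w^\Box(B).
\end{equation*}
Here I use $F\E 1$ to bound $\E_w(s,A\r B)*\E_w(t,A)\le\E_w(s\cdot t,B)$, and the crisp-accessibility ``fuzzy K'' inequality \eqref{eq:fuzzy modal e(K-axiom)} (which follows straight from $\V^\Box$ being an infimum over $\R$-successors and the definition of residuum) to bound $\V_w^\Box(A\r B)*\V_w^\Box(A)\le\V_w^\Box(B)$, then combine the two using commutativity, associativity, and monotonicity of the t-norm.

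For the rules, \textbf{MP} is immediate: if $\V_w(A)=1$ and $\V_w(A\r B)=\V_w(A)\Rightarrow\V_w(B)=1$, then $1\le\V_w(B)$, so $\V_w(B)=1$. For \textbf{IAN} I do a side-induction on the nesting depth $n$. The base case $n=1$ uses $F\E 3$ to get $\E_w(c_{i_1},A)=1$ and the already-verified axiom case to get $\V_v(A)=1$ for every $v\in\W$ (hence $\V_w^\Box(A)=1$), so $\V_w(c_{i_1}\!:\!A)=1*1=1$. For the induction step, downward closure of $\CS$ gives $c_{i_{n-1}}\!:\!\cdots\!:\!c_{i_1}\!:\!A\in\CS$, so $F\E 3$ yields $\E_w(c_{i_n},\,c_{i_{n-1}}\!:\!\cdots\!:\!c_{i_1}\!:\!A)=1$; by the inductive hypothesis the inner formula is valid (i.e.\ equals $1$ in every world), so $\V_w^\Box$ of it is also $1$, and $\V 4$ gives $\V_w$ of the whole formula equal to $1$.

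\textbf{Anticipated obstacle.} The only non-routine step is the \textbf{Appl} case, because it requires that both the evidence-function side and the modal $\V^\Box$-side independently satisfy the ``K-like'' inequality and that the t-norm plays nicely with their product. Everything else is a direct unpacking of definitions plus the observation that each fixed world behaves like a $PC(*)$ valuation for the propositional skeleton.
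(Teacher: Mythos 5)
Your proposal is correct and follows essentially the same route as the paper's own proof: induction on derivations, with the BL axioms handled via Theorem \ref{thm:completeness BL}, \textbf{Appl} via $F\E 1$ combined with the inequality \eqref{eq:fuzzy modal e(K-axiom)} and residuation, \textbf{Sum} via $F\E 2$ and monotonicity of the t-norm, and \textbf{IAN} via downward closure of $\CS$ plus an inner induction on the nesting depth. Your explicit remark that each world acts as a $PC(*)$ valuation once $t\!:\!A$ is treated as an atom is a small but welcome clarification of a step the paper leaves implicit.
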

\begin{proof}
Assume that $F$ is provable in ${\sf BLJ}_\CS$. By induction on the proof of $F$ we show that $F$ is ${\sf BLJ}_\CS$-valid. Let $*$ be an arbitrary t-norm and $\M_*=(\W,\R,\E,\V)$ be an arbitrary ${\sf BLJ}_\CS$-model relative to $*$ and $w$ be  an arbitrary world in $\W$. We have to show that $\V_w(F)=1$. First suppose that $F$ is an axiom.
\begin{description}
  \item[$\bullet$] By Theorem \ref{thm:completeness BL}, all axioms of {\sf BL} are t-tautologies. Thus, the result is known for axioms of {\sf BL}.
  \item[$\bullet$] Suppose $F$ is $s:(A\r B)\r(t:A\r (s\cdot t):B)$. By $F\E 1$, we have 
  $$\E_w(s,A\r B)*\E_w(t,A)\leq\E_w(s\cdot t,B).$$
  On the other hand, from (\ref{eq:fuzzy modal e(K-axiom)}) we have
  \[\V_w^\Box(A\r B) * \V_w^\Box(A) \leq \V_w^\Box(B).\]
  Hence, by the properties of t-norms
  \[\E_w(s,A\r B)*\V_w^\Box(A\r B)*\E_w(t,A) * \V_w^\Box(A) \leq \E_w(s\cdot t,B) * \V_w^\Box(B).\]
     Thus, by $\V4$,
   \[ \V_w(s:(A\r B))* \V_w(t:A)\leq \V_w(s\cdot t:B),\]
   or
   \[ \V_w(s:(A\r B)\leq \V_w(t:A) \Rightarrow \V_w(s\cdot t:B),\]
   that is
   \[ \V_w(s:(A\r B)\leq \V_w(t:A \r s\cdot t:B).\]
   Therefore, $\V(F)=1$.
    \item[$\bullet$] Suppose $F$ is $s:A\r (s+t):A$. By $F\E2$, 
$$\E_w(s,A)\leq \E_w(s+t,A).$$
Hence, by the properties of t-norms
   $$\E_w(s,A) * \V_w^\Box(A)\leq \E_w(s+t,A) * \V_w^\Box(A).$$
Thus, by $\V4$, 
$$\V_w(s:A)\leq \V_w(s+t:A).$$
  Therefore, $\V_w(F)=1$.  The case for $s:A\r (t+s):A$ is similar.
    \end{description}
Now suppose $F$ is obtained from $G$ and $G\r F$ by MP. It is easy to verify that if $\V_w(G)=\V_w(G\r F)=1$ then $\V_w(F)=1$. 

Finally, suppose $F=c_{i_n}:c_{i_{n-1}}:\ldots:c_{i_1}:A$ is obtained by the rule IAN. Thus $c_{i_n}:c_{i_{n-1}}:\ldots:c_{i_1}:A\in\CS$. Since $\CS$ is downward closed we have 
\[c_{i_j}:c_{i_{j-1}}:\ldots:c_{i_1}:A\in\CS,\]
for every $1\leq j\leq n$. Since $\M$ is a ${\sf BLJ}_\CS$-model, we have $$\E_w(c_{i_j},c_{i_{j-1}}:\ldots:c_{i_1}:A)=1,$$
for every $1\leq j\leq n$. It is easy to prove, by induction on $j$, that 
$$\V_v(c_{i_j}:c_{i_{j-1}}:\ldots:c_{i_1}:A)=1,$$
for every $1\leq j\leq n$ and for every $v\in\W$. Therefore $$\V_w(c_{i_n}:c_{i_{n-1}}:\ldots:c_{i_1}:A)=1.$$\qed 
\end{proof}
 It remains open whether the converse of the above theorem, that is the completeness theorem, does hold. Let us now introduce Mkrtychev models (or M-models for short) for {\sf BLJ} which are singleton fuzzy Fitting models.

\begin{definition}\label{fuzzy M-model}
Let $*$ be a t-norm and $\Rightarrow$ be its residuum. A structure $M_*=(\E,\V)$ (relative to $*$) is a fuzzy M-model for ${\sf BLJ}_\CS$ (or ${\sf BLJ}_\CS$-M-model), if $\V$ is a truth valuation of propositional variables $\V:\mathcal{P}\r [0,1]$, and $\E$ is a fuzzy admissible evidence function $\E:Tm \times Fm_{\sf BLJ} \r [0,1]$ satisfying the following
conditions:
\begin{enumerate}
 \item If $\E(s,A\r B)*\E(t,A)\leq\E(s\cdot t,B)$.
 \item $\E(s,A)\leq \E(t+s,A)$, $\E(s,A)\leq \E(s+t,A)$.
 \item If $c:F\in\CS$, then $\E(c,F)=1$.
\end{enumerate}
The truth valuation $\V$ extends uniquely to all formulas of {\sf BLJ} as follows:
\begin{enumerate}
\item $\V(\bar{0})=0$,
\item $\V(A\r B)= \V(A)\Rightarrow \V(B)$,
\item $\V(A\& B)= \V(A)* \V(B)$,
 \item $\V(t:A)=\E(t,A)$.
\end{enumerate}
A formula $A$ is ${\sf BLJ}_\CS$-M-valid if $\V(A)=1$ for every continuous t-norm $*$ and every ${\sf BLJ}_\CS$-M-model $\M_*=(\E,\V)$.
\end{definition}

The soundness theorem of {\sf BLJ} with respect to M-models is a consequence of Theorem \ref{thm:soundness BLJ}.

\begin{theorem}\label{thm:soundness BLJ M-model}
For any constant specification $\CS$ for {\sf BLJ}, if $F$ is provable in ${\sf BLJ}_\CS$, then $F$ is ${\sf BLJ}_\CS$-M-valid.
\end{theorem}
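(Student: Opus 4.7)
The strategy is to reduce soundness for M-models to the soundness for fuzzy Fitting models already established as Theorem~\ref{thm:soundness BLJ}, by regarding every M-model as the special case of a Fitting model having a single world and no accessibility relation.

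Fix a continuous t-norm $*$ and let $M_*=(\E,\V)$ be an arbitrary ${\sf BLJ}_\CS$-M-model relative to $*$. I build an associated Fitting model $\M_*=(\W,\R,\hat{\E},\hat{\V})$ relative to the same $*$ by taking $\W=\{w_0\}$, $\R=\emptyset$, $\hat{\E}_{w_0}(t,A)=\E(t,A)$ for every $t\in Tm$ and $A\in Fm_{\sf BLJ}$, and $\hat{\V}_{w_0}(p)=\V(p)$ for every $p\in\mathcal{P}$. The conditions $F\E 1$--$F\E 3$ for $\hat{\E}$ are verbatim the conditions already assumed for $\E$, so $\M_*$ is a genuine ${\sf BLJ}_\CS$-Fitting model.

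The key observation is that because $\R=\emptyset$, for every formula $A$ we have $\hat{\V}^{\Box}_{w_0}(A)=\inf\emptyset=1$ in $[0,1]$ under the standard convention that the infimum of the empty set is the top element. Consequently $\hat{\V}_{w_0}(t:A)=\hat{\E}_{w_0}(t,A)*1=\E(t,A)=\V(t:A)$, which matches the M-model clause for justification assertions exactly. A straightforward induction on formula complexity (the cases $\bar{0}$, $A\r B$, and $A\& B$ being immediate because the Fitting model uses the same $*$ and the same residuum $\Rightarrow$ as the M-model) then gives $\hat{\V}_{w_0}(A)=\V(A)$ for every $A\in Fm_{\sf BLJ}$.

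If $F$ is provable in ${\sf BLJ}_\CS$, Theorem~\ref{thm:soundness BLJ} yields $\hat{\V}_{w_0}(F)=1$ and hence $\V(F)=1$. Since the t-norm $*$ and the M-model $M_*$ were arbitrary, $F$ is ${\sf BLJ}_\CS$-M-valid. The only delicate point is the convention $\inf\emptyset=1$ in $[0,1]$, which must be explicitly adopted so that $\V^{\Box}$ behaves as the top element at dead-end worlds; no genuine obstacle arises beyond this notational issue.
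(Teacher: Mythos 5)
Your proof is correct and takes essentially the same route as the paper, which states the theorem as a consequence of Theorem~\ref{thm:soundness BLJ} after introducing M-models precisely as ``singleton fuzzy Fitting models''; you simply make explicit the reduction (single world, empty accessibility relation, $\inf\emptyset=1$) that the paper leaves implicit. No gaps.
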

%%%%%%%%%%%%%%%%%%%%%%%%%%%%%%%%%%%%%%%%%%%%%%%%%%%%%%%%%%%%%%%%%%%%%%%
Now let us define extensions of {\sf BLJ}.
\begin{definition}\label{def:axiomatic L-G-Pi-J}
The following logics have the same language as {\sf BLJ}.
\begin{enumerate}
\item {\sf {\L}J} is obtained from {\sf BLJ} by adding the axiom scheme \ref{eq:Lukasiewicz axiom}.
\item {\sf GJ} is obtained from {\sf BLJ} by adding the axiom scheme \ref{eq:Godel axiom}.
\item  ${\sf \Pi J}$ is obtained from {\sf BLJ} by adding the axiom scheme \ref{eq:Product axiom}.
\end{enumerate}
\end{definition}

In the rest of this section let ${\sf L}$ denote either {\sf {\L}J}, {\sf GJ}, or ${\sf \Pi J}$. Note that the axiom instance $A$ in the Iterated Axiom Necessitation rule in {\sf L} should be now an axiom  instance of {\sf L}. The definition of constant specifications $\CS$ for {\sf L} and ${\sf L}_\CS$ are similar to those of \J. 

\begin{definition}
An  ${\sf BLJ}_\CS$-model $\M_*=(\W,\R,\E,\V)$ is an 
\begin{enumerate}
\item {\sf {\L}J}-model if the t-norm $*$ is $*_L$.
\item {\sf GJ}-model if the t-norm $*$ is $*_G$.
\item ${\sf \Pi J}$-model if the t-norm $*$ is $*_P$.
\end{enumerate}
A formula $A$ is ${\sf L}_\CS$-valid if $\V_w(A)=1$ for each ${\sf L}_\CS$-model $\M=(\W,\R,\E,\V)$ and every $w\in\W$.
\end{definition}

Now soundness of {\sf {\L}J}, {\sf GJ}, and ${\sf \Pi J}$ with respect to their models can be obtained easily from Theorems \ref{thm:completeness L-G-Pi} and \ref{thm:soundness BLJ}. 

\begin{theorem}\label{thm:soundness LJ-GJ-PiJ}
For a given constant specification $\CS$ for {\sf L}, if $F$ is provable in ${\sf L}_\CS$, then $F$ is ${\sf L}_\CS$-valid.
\end{theorem}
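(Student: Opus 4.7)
The plan is to extend the proof of Theorem \ref{thm:soundness BLJ} by induction on the length of derivations in ${\sf L}_\CS$. I would fix the corresponding continuous t-norm $*$ (namely $*_L$, $*_G$, or $*_P$), let $\M_* = (\W, \R, \E, \V)$ be an arbitrary ${\sf L}_\CS$-model, $w \in \W$ arbitrary, and aim to show $\V_w(F) = 1$. For the axioms Appl and Sum and for the rules MP and IAN, the argument would be essentially identical to the corresponding cases in Theorem \ref{thm:soundness BLJ}, and the axioms BL1--BL7 would be handled exactly as there, via Theorem \ref{thm:completeness BL}. So the only genuinely new case is verifying that instances of the added axiom of ${\sf L}$---namely (L), (G), or (P)---evaluate to $1$ at every world of the corresponding model.

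The key observation for handling the new axiom will be that if $F$ is such an axiom instance, then the truth value $\V_w(F)$ can be computed as if $F$ were a pure propositional formula of $PC(*)$, by treating each maximal subformula of $F$ of the form $t:A$, together with each propositional variable occurring in $F$, as an atom whose value at $w$ is $\V_w(t:A)$, respectively $\V_w(p)$. The clauses $\V 1$--$\V 3$ in the definition of a ${\sf BLJ}_\CS$-model then ensure that $\V_w$ restricted to these atoms induces a propositional valuation $\V'$ of $PC(*)$ with $\V_w(F) = \V'(F)$. Since $F$ is a theorem of ${\sf {\L}}$, {\sf G}, or ${\sf \Pi}$ respectively, Theorem \ref{thm:completeness L-G-Pi} will give that $F$ is a 1-tautology of the corresponding logic with respect to the t-norm $*$, so $\V'(F) = 1$ and hence $\V_w(F) = 1$.

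For the IAN rule I would simply reuse the argument from the proof of Theorem \ref{thm:soundness BLJ}: using downward closure of $\CS$ and condition $F\E 3$, a straightforward inner induction on $j$ shows $\V_v(c_{i_j}:\ldots :c_{i_1}:A) = 1$ for every $v \in \W$, the base case being that the axiom $A$ of ${\sf L}$ evaluates to $1$ in every world---which is supplied by the outer induction, now covering the new axioms as well. The only mild obstacle is really just the substitution remark of the previous paragraph, and that is essentially immediate from the fact that the truth functions of $\r$ and $\&$ in $\M_*$ are precisely $\Rightarrow$ and $*$, so the propositional structure of $F$ alone determines $\V_w(F)$ once the values at the chosen atomic subformulas are fixed.
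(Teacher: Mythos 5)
Your proposal is correct and follows exactly the route the paper indicates: the paper offers no detailed proof, merely remarking that the result ``can be obtained easily from Theorems \ref{thm:completeness L-G-Pi} and \ref{thm:soundness BLJ}'', and your argument---repeating the induction of Theorem \ref{thm:soundness BLJ} and reducing instances of the new axiom (L), (G), or (P) to 1-tautologies of the corresponding propositional logic by treating maximal subformulas $t:A$ and propositional variables as atoms---is precisely the intended elaboration. The substitution observation you make explicit is also tacitly needed in the paper's own handling of the {\sf BL} axioms in Theorem \ref{thm:soundness BLJ}, so spelling it out is a refinement of, not a departure from, the paper's approach.
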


M-models for {\sf {\L}J}, {\sf GJ}, and ${\sf \Pi J}$ can be defined similar to M-models of {\sf BLJ}, and the soundness theorem also holds for M-validity.
%%%%%%%%%%%%%%%%%%%%%%%%%%%%%%%%%%%%%%%%%%%%%%%%%%%%%%%%%%%%%%%%%%%%%%%%%%
\section{Fuzzy justification logics with truth constants}\label{sec:Fuzzy justification logics with truth constants}
In classical propositional logic the truth constants $\top$ (truth) and $\bot$ (falsity) can be defined respectively by $p\vee\neg p$ and $p\wedge\neg p$, for some propositional variable $p$. But in many-valued logics (introduced in Section \ref{sec:T-norm based justification logics}) we cannot define all truth values $[0,1]$ in our object language, and thus they may be added explicitly to the language. A well-known fuzzy logic with truth constants was introduced by Pavelka \cite{Pavelka} (for a recent account of this logic see \cite{Novak2000}). Hajek \cite{Hajek1998} gives a simple formulation of this logic, called \textit{Rational Pavelka Logic} {\sf RPL}, in which rational numbers in $[0,1]$, as truth constants,  are added to the language of {\L}ukasiewicz logic {\sf {\L}}. In this section, we first recall {\sf RPL}, and then extend it with justification assertions, and prove a soundness and a graded-style completeness theorem.

The language of {\sf RPL} is an extension of the language of {\L}ukasiewicz logic {\sf {\L}} by truth constants $\bar{r}$, for each rational number $r\in\mathbb{Q}\cap[0,1]$. More precisely, formulas of {\sf RPL} is constructed by the following grammar:
\[ A::= p~|~\bar{r}~|~A\& A~|~A\rightarrow A,\]
where $p\in\mathcal{P}$ and $r\in\mathbb{Q}\cap[0,1]$. Formulas of the form $\bar{r} \r A$ are called \textit{graded formulas}. Axioms and rules of  {\sf RPL} is given by adding the following bookkeeping axiom schemes for truth constants to axioms and rules of  {\sf{\L}}:
 
\begin{description}
  \item[TC1.] $(\bar{r}\r \bar{r'}) \equiv \overline{r\Rightarrow_L r'}$,
   \item[TC2.] $(\bar{r}\& \bar{r'}) \equiv \overline{r*_L r'}$,
  \end{description}
for all $r,r'\in\mathbb{Q}\cap[0,1]$.

Note that if $r\leq r'$, then $r \Rightarrow_L r' = 1$. Therefore, by TC1, $(\bar{r}\r \bar{r'}) \equiv \overline{r\Rightarrow_L r'} =\bar{1}$. Since $\RPL\vdash \bar{1}$, we can conclude that $\RPL\vdash \bar{r} \r \bar{r'}$.

A truth valuation $\V$ of formulas of {\sf RPL} is defined similar to that of {\sf {\L}}, with the following addition:
\[ \V(\bar{r})=r\]
for any $r\in\mathbb{Q}\cap[0,1]$. By this definition, it is obvious that a graded formula $\bar{r} \r A$ is true if{f} the truth value of $A$ is at least $r$, i.e. 
\[ \V(\bar{r} \r A)=1 ~~\mbox{if{f}}~~ r\leq \V(A).\]

A truth valuation $\V$ of $\RPL$ is a model of a finite set of $\RPL$-formulas $T$, if $\V(A)=1$ for all $A\in T$. The following strong completeness theorem for $\RPL$ is proved in \cite{Hajek1998}.

\begin{theorem}\label{thm:strong completeness RPL}
Let $T$ be a finite set of $\RPL$-formulas, and $F$ be an $\RPL$-formula. $\RPL\vdash F$ if{f} $F$ is true in every model of $T$.
\end{theorem}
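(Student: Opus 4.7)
The plan is to prove the two directions of the finite strong completeness separately, reading the statement as: $T \vd_{\RPL} F$ iff every RPL-valuation that validates $T$ also validates $F$.

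For the soundness direction ($\Rightarrow$), I would argue by induction on the length of a derivation of $F$ from $T$. The {\sf{\L}}-axioms are 1-tautologies by Theorem \ref{thm:completeness L-G-Pi}, so they evaluate to $1$ under every valuation, in particular under every model of $T$. The bookkeeping axioms TC1 and TC2 are validated because $\V(\bar r)=r$ by definition, and then $\V(\bar r\r\bar{r'})= r\Rightarrow_L r' = \V(\overline{r\Rightarrow_L r'})$, and similarly for $\&$. Modus ponens preserves truth in Łukasiewicz semantics: if $\V(A)=1$ and $\V(A)\Rightarrow_L\V(B)=1$ then $\V(B)=1$. Since $T$ is fixed, its assumptions remain true throughout.

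For the completeness direction ($\Leftarrow$), I would proceed in three steps. First, since $T$ is finite, replace it by a single formula $A := A_1 \& \cdots \& A_k$, reducing the problem to showing provability of an implication from $A$ to $F$. Second, appeal to the local deduction theorem for {\sf{\L}}-based calculi: $T\cup\{A\}\vd F$ iff $T\vd A^n\r F$ for some $n\geq 1$, where $A^n$ is the $n$-fold strong conjunction of $A$. This converts strong consequence into ordinary provability modulo the choice of~$n$. Third, eliminate truth constants by encoding: let $\S$ be a finite subset of $\mathbb{Q}\cap[0,1]$ containing all rationals occurring in $T\cup\{F\}$ and closed under the relevant finite applications of $*_L$ and $\Rightarrow_L$; replace each $\bar r$ ($r\in\S$) by a fresh propositional variable $p_r$, and let $\d$ be the finite list of instances of TC1 and TC2 restricted to $\S$. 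The translated question becomes a pure {\sf{\L}}-problem, to which the finite strong completeness of {\sf{\L}} (a consequence of Theorem \ref{thm:completeness L-G-Pi} combined with the deduction theorem) applies. Any {\sf{\L}}-valuation of the translated language satisfying $\d$ is forced to assign $\V(p_r)=r$ on all of $\S$, hence corresponds to a genuine RPL-valuation satisfying $T$, which by hypothesis satisfies $F$.

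The main obstacle is the third step, namely the faithful encoding of the rational constants as propositional variables while keeping the axiom set finite. One must verify that the bookkeeping equations, anchored by the provable value $\V(p_1)=1$, uniquely pin down $\V(p_r)=r$ for all $r\in\S$, and that no Łukasiewicz valuation of the enriched language escapes this constraint. The subtlety is the interplay between the discrete nature of the bookkeeping axioms (only finitely many are in play) and the dense nature of $\mathbb{Q}\cap[0,1]$, which is also why the theorem is stated for \emph{finite} $T$: infinite strong completeness notoriously fails for Łukasiewicz logic, and the truth-constant machinery of $\RPL$ was introduced precisely to recover graded provability within the finite-strong framework, via provability of the graded formulas $\bar r\r F$.
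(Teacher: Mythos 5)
First, for context: the paper offers no proof of this theorem at all --- it is imported directly from H\'{a}jek \cite{Hajek1998} (his finite strong completeness theorem for \RPL), so there is nothing in-paper to compare against; also the printed statement has a typo (the left side should read $T\vd_{\RPL} F$), and you have correctly proved the intended statement. Your strategy --- conjoin the finite $T$ into $A$, invoke the local deduction theorem, and eliminate the truth constants in favour of fresh variables $p_r$ constrained by finitely many bookkeeping instances --- is the standard textbook route, and the obstacle you single out (that the bookkeeping instances pin down $\V(p_r)=r$) is real but surmountable: the rationals occurring in $T\cup\{F\}$ admit a common denominator $d$, the set $\{0,1/d,\dots,1\}$ is closed under $*_L$ and $\Rightarrow_L$, and writing $v_k=\V(p_{k/d})$ one gets $v_{k-1}\leq v_k$ from the instance of TC1 with $(k-1)/d\leq k/d$, then $v_k\Rightarrow_L v_{k-1}=v_{d-1}=1-v_1$ from two further instances, hence $v_k-v_{k-1}=v_1$ for all $k$, and telescoping gives $dv_1=1$, i.e.\ $v_k=k/d$. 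So that step can be completed.

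The genuine gap is elsewhere: you claim that the finite strong completeness of {\sf {\L}} is ``a consequence of Theorem \ref{thm:completeness L-G-Pi} combined with the deduction theorem.'' It is not. Those two facts only yield the chain $A\vd_{\sf {\L}} F$ iff $\vd_{\sf {\L}} A^n\r F$ for some $n$ iff $A^n\r F$ is a 1-tautology for some $n$; to connect this to your hypothesis you still need the purely semantic implication ``if every valuation with $\V(A)=1$ has $\V(F)=1$, then $A^n\r F$ is a 1-tautology for some $n$.'' That implication is where all the analytic content lives --- it rests on compactness of the valuation space and the continuity and piecewise linearity of {\L}ukasiewicz truth functions (equivalently, on Chang's algebraic completeness), and it is exactly the step that fails for infinite $T$, as you yourself observe. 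Theorem \ref{thm:completeness L-G-Pi} is only weak completeness and does not supply it. Since your entire completeness direction bottoms out on this point, you must either prove finite strong standard completeness of {\sf {\L}} separately or cite it as an independent result rather than derive it from what the paper states.
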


Now by adding justification assertions to {\sf RPL}, we give a Pavelka based justification logic. We denote this system by {\sf RPLJ}. Formulas of $\F$ are constructed by the following grammar:
\[ A::= p~|~\bar{r}~|~A\& A~|~A\rightarrow A~|~t:A,\]
where $p\in\mathcal{P}$, $r\in\mathbb{Q}\cap[0,1]$, and $t\in Tm$ (here $Tm$ is the same set of justification terms of basic justification logic $\J$ in Section \ref{sec:Basic justification logic}). The connectives $\neg, \wedge, \vee, \equiv, \leftrightarrow$ are defined as in {\sf BL}. 

Thanks to truth constants, the following useful operators can be defined in the language of $\F$:
\begin{eqnarray*}
  t:_r A &:=& \bar{r}\r t:A \\
  t:^r A &:=&  t:A \r \bar{r} \\
  t \overset{r}{:} A &:=& \bar{r}\leftrightarrow t:A = (t:_r A) \wedge (t:^r A)
\end{eqnarray*}

Informally, $t:_r A$, $t:^r A$, and $t \overset{r}{:} A$ can be read respectively as (see Lemma \ref{lem:truth value graded justification assertions})
\begin{itemize}
\item ``$t$ is a justification for believing $A$ with certainty degree at least $r$",
\item ``$t$ is a justification for believing $A$ with certainty degree at most $r$", 
\item  ``$t$ is a justification for believing $A$ with  certainty degree $r$."
\end{itemize}

Formulas of the form $t:_r A$, $t:^r A$, or $t \overset{r}{:} A$ are called \textit{graded justification assertions}.\footnote{A similar approach but in the framework of modal logics was given by  Mironov \cite{Mironov2005}. Mironov presented a fuzzy modal logic with modalities $\Box_a$, where $a$ is a member of a complete lattice, so that $\Box_a A$ is read ``the plausibility measure of $A$ is equal to $a$."}

\begin{definition}
Pavelka based justification logic \F~is given by adding the axiom schemes Sum and Appl (from Definition \ref{def: axioms of J}) and the following rule to axioms and rules of \RPL: 
\begin{description}
 \item[GIAN.] Graded Iterated Axiom Necessitation Rule: $$\vd c_{i_n}\overset{1}{:} c_{i_{n-1}}\overset{1}{:} \ldots\overset{1}{:} c_{i_1}\overset{1}{:} A,$$ where $A$ is an axiom instance of \F, $c_{i_j}$'s are arbitrary justification constants and $n\geq 1$.
\end{description}
\end{definition}

Note that in contrast to the language of {\sf BLJ}, in $\F$ we are able to express explicitly the certainty degree of a justification for a statement. Particularly, in {\sf BLJ} for an axiom instance $A$, by rule IAN, we could deduce $c:A$, for some constant $c$, which means that $c$ is an evidence for $A$ with a certainty degree taken form the interval $[0,1]$. But in $\F$, we could deduce $c\overset{1}{:} A$ which means that $c$ is a completely convincing evidence for $A$, and this is what we expect.

\begin{definition}[Constant specification]
A \textit{constant specification} $\CS$ is a set of formulas of
the form $$c_{i_n}\overset{1}{:} c_{i_{n-1}}\overset{1}{:} \ldots\overset{1}{:} c_{i_1}\overset{1}{:} A,$$ where $n\geq 1$, $c_{i_j}$'s are arbitrary justification constants and $A$ is an axiom instance of \F, such that it is downward closed..
\end{definition}

\begin{definition}
A constant specification $\CS$ is \textit{axiomatically appropriate} if for each axiom instance $A$ of \F~there is a constant $c$ such that $c\overset{1}{:}A\in\CS$, and in addition if $F\in\CS$, then $c\overset{1}{:}F\in\CS$ for some constant $c$.
\end{definition}
For a constant specification $\CS$, $\F_\CS$ is a fragment of \F~in which the rule GIAN only produces formulas from the given $\CS$. For a set of formulas $T$, formula $A$ and constant specification $\CS$, we write $T\vd_\CS A$ if $A$ is derived from  $T$ in $\F_\CS$. 

\begin{lemma}\label{lem:theorems of RPLJ}
Given an arbitrary constant specification $\CS$, the following statements are theorems of $\F_\CS$. For all \F-formulas $A$, all $t\in Tm$, and all $r,r'\in \mathbb{Q}\cap[0,1]$:
\begin{enumerate}
\item $t:^1 A$.
\item $t:_0 A$.
\item $\neg t:^r A \r t:_r A$.
 \item $\neg t:_r A \r t:^r A$.
   \item $t:_{r'} A \r t:_r A$, where $r \leq r'$.
%    \item $s:_r A \r s+t:_r A$.   
%     \item $s:_r A \r t+s:_r A$.
      \item $t:_1 A\equiv t\overset{1}{:} A$.
      \item $t\overset{1}{:} A \r t:A$.
      \item $t:_r A \vee t:^r A$.
\end{enumerate}
\end{lemma}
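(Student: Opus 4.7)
After unfolding $t:_r A = \bar r \r t{:}A$, $t:^r A = t{:}A \r \bar r$, and $t\overset{r}{:}A = (t:_r A) \w (t:^r A)$, each of (1)--(8) becomes a substitution instance of a propositional theorem of $\RPL$ (treating $t{:}A$ as an atomic formula for the purposes of the substitution), together with a handful of applications of \textbf{MP}. In particular, none of the justification-specific ingredients \textbf{Appl}, \textbf{Sum}, or \textbf{GIAN} is used; everything happens already at the level of $\RPL$.

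I would dispatch the ``easy'' items first. Item (1) follows from $\vd\bar 1$ (Lemma~\ref{lem:theorems of BL}(1)) together with the weakening $\bar 1\r(t{:}A\r\bar 1)$ (Lemma~\ref{lem:theorems of BL}(2)) by \textbf{MP}; (2) is a direct instance of \textbf{BL7}; (8) is Lemma~\ref{lem:theorems of BL}(8) with $(A,B)=(\bar r, t{:}A)$. For (5), the paper already notes $\vd_{\RPL}\bar r\r\bar{r'}$ whenever $r\leq r'$, and substituting into \textbf{BL1} with one \textbf{MP} yields $(\bar{r'}\r t{:}A)\r(\bar r\r t{:}A)$. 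For (3) and (4) the workhorse is the {\L}ukasiewicz theorem $\neg(A\r B)\r(B\r A)$, which I would verify to be a $1$-tautology of {\L} by a one-line case split on whether $\V(A)\leq \V(B)$ (using $\V(\neg(A\r B))=\max(0,\V(A)-\V(B))$); by Theorem~\ref{thm:completeness L-G-Pi} it is then a theorem of {\L}, and (3), (4) are its substitution instances with $(A,B)=(t{:}A,\bar r)$ and $(\bar r, t{:}A)$ respectively.

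The less mechanical items are (6) and (7). For (6), the backward half $t\overset{1}{:}A\r t:_1 A$ is an instance of $A\w B\r A$ (Lemma~\ref{lem:theorems of BL}(4)). For the forward half, (1) combined with weakening yields $\vd t:_1 A\r t:^1 A$, which Lemma~\ref{lem:theorems of BL}(5) upgrades to $\vd t:_1 A\r(t:_1 A\w t:^1 A)$, i.e.\ $\vd t:_1 A\r t\overset{1}{:}A$. To splice both implications into the single $\equiv$-formula I would use the derivable \textbf{BL}-theorem $P\r(Q\r(P\& Q))$, obtained by residuating the trivial $(P\& Q)\r(P\& Q)$ through \textbf{BL5b}, and applying \textbf{MP} twice. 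For (7), the \textbf{BL}-theorem $X\r((X\r Y)\r Y)$---immediate from $X*(X\Rightarrow Y)\leq Y$ by residuation---combined with $\vd\bar 1$ and \textbf{MP} gives $(\bar 1\r t{:}A)\r t{:}A$, that is $t:_1 A\r t{:}A$; composing this with the backward half of (6) via \textbf{BL1} finishes the proof.

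The only place where one has to pause is inside (6): collecting two separately-proved implications into a single $\equiv$-formula needs the derived rule ``from $\vd P$ and $\vd Q$ conclude $\vd P\& Q$,'' which is not explicitly listed in Lemma~\ref{lem:theorems of BL} but is the routine consequence of \textbf{BL5b} sketched above. Everything else is pure substitution into an $\RPL$-tautology followed by \textbf{MP}.
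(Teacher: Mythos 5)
Your proof is correct, and for items 1, 2, 5, 8 and the first half of 6 it coincides with the paper's derivation (same appeals to Lemma~\ref{lem:theorems of BL} clauses 1, 2, 4, 5, 8, to BL7, and to $\vd_\CS\bar r\r\bar{r'}$ plus BL1). The divergences are in 3, 4, and 7. For 3 and 4 the paper stays entirely syntactic: it reads $\neg t:^r A$ as $(t{:}A\r\bar r)\r\bar 0$, applies axiom BL6 to get $\neg t:^r A\r\neg\neg t:_r A$, and then discharges the double negation with the {\L}ukasiewicz axiom \ref{eq:Lukasiewicz axiom} and BL1; you instead isolate the single {\L}-schema $\neg(A\r B)\r(B\r A)$, certify it semantically as a $1$-tautology and import it via Theorem~\ref{thm:completeness L-G-Pi}. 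Both are sound (your semantic detour is legitimate because the axioms are schemes, so the {\L}-theorem instantiates with $t{:}A$ in place of an atom), but the paper's route makes visible exactly which axioms of ${\sf {\L}}$ are responsible, which is arguably more informative in an axiomatic development. For 7 the paper composes $t\overset{1}{:}A\r(\bar 1\r t{:}A)$ with the exchange theorem (Lemma~\ref{lem:theorems of BL} clause 6) and then strips the outer $\bar 1$; your route via $X\r((X\r Y)\r Y)$ and $\vd\bar 1$ reaches the same endpoint in essentially the same number of steps. Finally, you are right that assembling the two implications of item 6 into the single formula $t:_1A\equiv t\overset{1}{:}A$ (where $\equiv$ is the $\&$-conjunction of the two directions) requires the adjunction rule derived from $P\r(Q\r(P\& Q))$ via BL5b; the paper proves the two directions and leaves this splicing step implicit, so your explicit treatment closes a small gap rather than introducing one.
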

\begin{proof}
\begin{enumerate}
\item By Lemma \ref{lem:theorems of BL} clause 2, $$\bar{1} \r (t:A \r \bar{1}).$$ Since $\bar{1}$ is provable in  $\F_\CS$ (Lemma \ref{lem:theorems of BL} clause 1), by MP, $$ t:A \r \bar{1}.$$ That is, $ t:^1 A$.
\item $t:_0 A$ is an instance of axiom BL7, i.e. $\bar{0} \r t:A$.
\item Note that $\neg t:^r A$ is an abbreviation of $(t:A \r \bar{r}) \r \bar{0}$. Now by axiom BL6,
\[((t:A \r \bar{r}) \r \bar{0}) \r (((\bar{r} \r t:A) \r \bar{0} ) \r \bar{0}),\]
or, in our notations, $\neg t:^r A \r \neg \neg t:_r A$. Then, by axiom \ref{eq:Lukasiewicz axiom} of {\L}ukasiewicz logic and axiom BL1 using MP, we obtain $\neg t:^r A \r t:_r A$.
\item Similar to clause 3.
\item As mentioned above, if $r \leq r'$, then $\vdash_\CS \bar{r} \r \bar{r'}$, and therefore by axiom BL1 and MP, we obtain $$(\bar{r'} \r t:A) \r (\bar{r} \r t:A).$$ That is, $t:_{r'} A \r t:_r A$.
\item By Lemma \ref{lem:theorems of BL} clause 4, $t\overset{1}{:} A \r t:_1 A$. For the converse, by Lemma \ref{lem:theorems of BL} clause 2, we have $$t:^1 A \r (t:_1 A \r t:^1 A).$$ From this and clause 1 using MP, we obtain $t:_1 A \r t:^1 A$. Hence, by Lemma \ref{lem:theorems of BL} clause 5 using MP, $$t:_1 A \r (t:_1 A \wedge t:^1 A).$$ That is, $t:_1 A\r t\overset{1}{:} A$.
\item By clause 6, $t\overset{1}{:} A \r t:_1 A$, or $t\overset{1}{:} A \r (\bar{1}\r t:A)$ is provable in  $\F_\CS$. By Lemma \ref{lem:theorems of BL} clause 6, 
\[( t\overset{1}{:} A \r (\bar{1}\r t:A))\r (\bar{1} \r (t\overset{1}{:} A \r t:A))\]
is provable in  $\F_\CS$, from this by MP we infer $\bar{1} \r (t\overset{1}{:} A \r t:A)$. And thus, $t\overset{1}{:} A \r t:A$  is provable in  $\F_\CS$.
\item By Lemma \ref{lem:theorems of BL} clause 8, $(\bar{r}\r t:A)\vee(t:A \r \bar{r})$. That is $t:_r A \vee t:^r A$.
\qed
\end{enumerate} 
\end{proof}

\begin{lemma}
Given an arbitrary constant specification $\CS$, the following rules are admissible in $\F_\CS$. For all \F-formulas $A$ and $B$, all $t\in Tm$, and all $r,r'\in \mathbb{Q}\cap[0,1]$:
\begin{enumerate}
\item Graded Modus Ponens:
 $$\frac{\bar{r}\r (A\r B) \quad\bar{r'}\r A}{\overline{r*_L r'}\r B} \ GMP$$
 \item  Justified Graded Modus Ponens:
 $$\frac{\bar{r}\r s:(A\r B) \quad\bar{r'}\r t:A}{\overline{r*_L r'}\r s\cdot t:B}\ JGMP$$
or 
$$\frac{s:_r (A\r B) \quad t:_{r'} A}{s\cdot t:_{ r*_L r'} B}\ JGMP$$
\item Monotonicity:
$$\frac{s:_r A}{s+t:_r A}\ Mon_1\qquad\frac{s:_r A}{t+s:_r A}\ Mon_2$$
 \end{enumerate}
\end{lemma}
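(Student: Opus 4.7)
The plan is to derive all three rules from (i) the bookkeeping axiom TC2 that ties $\bar{r}\&\bar{r'}$ to $\overline{r*_L r'}$, (ii) a few propositional facts of {\sf BL} (namely BL1, BL3, BL5a, and Lemma \ref{lem:theorems of BL}(6)), and (iii) the already established justification axioms Appl and Sum. GMP is the keystone: JGMP reduces to GMP after a single application of Appl, and the monotonicity rules are a direct transitivity argument.

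For GMP, starting from the premises $\bar{r}\r(A\r B)$ and $\bar{r'}\r A$, I first swap the two antecedents via Lemma \ref{lem:theorems of BL}(6) to obtain $A\r(\bar{r}\r B)$, then compose this with $\bar{r'}\r A$ using BL1 to produce $\bar{r'}\r(\bar{r}\r B)$. An application of BL5a currifies this into $(\bar{r'}\&\bar{r})\r B$, and TC2 together with the commutativity axiom BL3 of $\&$ lets me replace $\bar{r'}\&\bar{r}$ by $\overline{r*_L r'}$, yielding the desired $\overline{r*_L r'}\r B$.

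For JGMP, I combine $\bar{r}\r s{:}(A\r B)$ with the axiom instance $s{:}(A\r B)\r(t{:}A\r s\cdot t{:}B)$ via BL1 to obtain $\bar{r}\r(t{:}A\r s\cdot t{:}B)$. Together with $\bar{r'}\r t{:}A$, GMP (now admissible) immediately yields $\overline{r*_L r'}\r s\cdot t{:}B$. The alternative formulation $s{:}_r(A\r B),\ t{:}_{r'} A \vd s\cdot t{:}_{r*_L r'} B$ is just a rereading of the same derivation through the abbreviation $t{:}_r A := \bar{r}\r t{:}A$.

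For monotonicity, starting from $s{:}_r A$, i.e.\ $\bar{r}\r s{:}A$, the axiom Sum $s{:}A\r (s+t){:}A$ combined with BL1 gives $\bar{r}\r (s+t){:}A$, which is $s+t{:}_r A$; the rule $Mon_2$ is proved symmetrically from the dual Sum axiom. The main obstacle throughout is bookkeeping the interaction between the implicational shape of graded formulas and the $\&$-based composition inherent in t-norms; once BL5a together with TC2 are identified as the bridge from $\bar{r'}\r(\bar{r}\r B)$ to $\overline{r*_L r'}\r B$, the rest is mechanical propositional manipulation inside $\F_\CS$.
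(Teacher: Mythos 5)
Your proof is correct, but it organizes the derivations differently from the paper. For GMP the paper simply cites H\'{a}jek's book, whereas you give an explicit derivation via the antecedent swap (Lemma~\ref{lem:theorems of BL}, clause 6), BL1, BL5a and TC2; this currying argument checks out (and since $r'*_L r=r*_L r'$ as rationals, $\overline{r'*_L r}$ is literally the constant $\overline{r*_L r'}$, so the appeal to BL3 is harmless but not strictly needed). For JGMP the paper does \emph{not} reduce to GMP: it conjoins the two graded premises with $\&$, applies Lemma~\ref{lem:theorems of BL} clause 7 to obtain $(\bar{r}\&\bar{r'})\r(s{:}(A\r B)\,\&\,t{:}A)$, rewrites the antecedent by TC2, and composes via BL1 with the uncurried implication $(s{:}(A\r B)\,\&\,t{:}A)\r s\cdot t{:}B$ obtained from Appl and BL5a. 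Your route --- prefixing Appl to the first premise by BL1 to get $\bar{r}\r(t{:}A\r s\cdot t{:}B)$ and then invoking the already-admissible GMP --- is more modular and makes the logical dependence of JGMP on GMP explicit, at the cost of resting on your own proof of GMP rather than the cited one; the paper's version is self-contained for JGMP even if one treats GMP as a black box. The monotonicity clauses are handled identically in both proofs (the Sum axiom plus BL1 transitivity).
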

\begin{proof}
The proof of clause 1 can be found in \cite{Hajek1998}.  For clause 2, suppose $\bar{r}\r s:(A\r B)$ and $\bar{r'}\r t:A$ are provable in $\F_\CS$. Thus, $$(\bar{r}\r s:(A\r B))\&(\bar{r'}\r t:A)$$ is provable in $\F_\CS$. Hence, from Lemma \ref{lem:theorems of BL} clause 7 using MP, $$(\bar{r}\& \bar{r'}) \r (s:(A\r B)\& t:A).$$
From this and axiom TC2, it follows that 
$$\overline{r*_L r'} \r (s:(A\r B)\& t:A).$$
Moreover, from axiom Appl and BL5a we deduce that 
\[(s:(A\r B)\& t:A)\r s\cdot t:B.\]
Therefore,
$\overline{r*_L r'} \r s\cdot t:B$. 

For clause 3, suppose that $s:_r A$, i.e. $\bar{r}\r s:A$, is provable in in $\F_\CS$. Now from this and axiom BL1, we obtain $\bar{r}\r s+t:A$, or $s+t:_r A$.\qed
\end{proof}

From rules JGMP, Mon$_1$, and Mon$_2$, it is immediately follows that the following rules are also admissible in $\F_\CS$:
 $$\frac{s\overset{r}{:} (A\r B) \quad t\overset{r'}{:} A}{s\cdot t:_{r*_L r'} B}\ JGMP',\qquad\frac{s\overset{r}{:} A}{s+t:_r A}\ Mon'_1,\qquad\frac{s\overset{r}{:} A}{t+s:_r A}\ Mon'_2.$$
 
The following form of the Deduction Theorem holds in \F. The proof is similar to that given in \cite{Hajek1998} for {\sf BL}. 
\begin{lemma}[Deduction Theorem]\label{lemma:Deduction Theorem}
$T,A\vdash_\CS B$ iff $T\vdash_\CS A^n\r B$ for some $n$, where $A^n:=A\&\ldots\& A$, $n$ times.
\end{lemma}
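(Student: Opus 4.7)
The strategy is the standard proof by induction on the length of derivation, closely mirroring H\'{a}jek's treatment of the deduction theorem for {\sf BL}, with the only additional case being the rule GIAN. The key technical tool is that although $\&$ is not idempotent, residuation gives $A, A\vdash A\& A$ internally via $A\r(A\r A\& A)$, so that a single hypothesis $A$ can be iterated to build up any power $A^n$.

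For the easier direction, assume $T\vdash_\CS A^n\r B$. Instantiating axiom BL5b on the identity $(A\&A)\r(A\&A)$ yields $A\r(A\r(A\&A))$; two applications of MP starting from the hypothesis $A$ give $T,A\vdash_\CS A\& A$. Iterating this $n-1$ times produces $T,A\vdash_\CS A^n$, and a final MP with the hypothesis of this direction yields $T,A\vdash_\CS B$.

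The interesting direction is left-to-right, by induction on the length of a derivation of $B$ from $T\cup\{A\}$ in $\F_\CS$. The base cases are: (i) if $B$ is an axiom of $\F$ or $B\in T$, then $T\vdash_\CS B$, and Lemma \ref{lem:theorems of BL}(2) plus MP yields $T\vdash_\CS A\r B$, so $n=1$ works; (ii) if $B=A$, the theorem $A\r A$ of {\sf BL} gives $n=1$; (iii) if $B$ is produced by GIAN, then $B\in\CS$ and so $\vdash_\CS B$ with no assumption, reducing to the axiom case. The main obstacle, as usual, is the MP step: if $B$ is inferred from $C$ and $C\r B$, then the induction hypothesis gives $T\vdash_\CS A^m\r C$ and $T\vdash_\CS A^k\r(C\r B)$ for some $m,k$. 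Applying Lemma \ref{lem:theorems of BL}(7) together with axiom BL5b (to form the conjunction of hypotheses) yields $T\vdash_\CS A^{m+k}\r\bigl((C\r B)\& C\bigr)$. The internal \emph{modus ponens} schema $(C\r B)\& C\r B$ is derivable from axiom BL5a applied to $(C\r B)\r(C\r B)$, and composing via transitivity (axiom BL1) gives $T\vdash_\CS A^{m+k}\r B$, so we may take $n=m+k$.

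The delicate point that I expect to require the most care is the bookkeeping of exponents in the MP case: exactly because $\&$ fails to be idempotent in {\sf BL}, one cannot collapse $A^{m+k}$ to $A$, and the statement of the theorem must quantify existentially over $n$. Apart from this, the GIAN case is benign because the rule is context-free and produces theorems rather than consuming hypotheses, so the induction goes through verbatim from H\'{a}jek's {\sf BL} argument.
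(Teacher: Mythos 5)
Your proof is correct and takes essentially the same route as the paper, which simply defers to H\'{a}jek's standard local-deduction-theorem argument for {\sf BL}; you reproduce that induction faithfully and correctly observe that the only new case, GIAN, is harmless because it is a premise-free rule that never consumes the hypothesis $A$.
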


Note that the ordinary deduction theorem, $T,A\vdash B$ iff $T\vdash A\r B$, only holds in G\"{o}del justification logic {\sf GJ} (for more details cf. \cite{Hajek1998}).

\begin{lemma}[Lifting Lemma]\label{lem:lifting lemma RPLJ}
Given axiomatically appropriate constant specification $\CS$, if $$A_1,\ldots,A_n \vdash_\CS F,$$ then there is a justification term
$t(x_1,\ldots,x_n)$, for variables $x_1,\ldots,x_n$, such that 
$$ x_1\overset{1}{:} A_1,\ldots,x_n\overset{1}{:} A_n \vdash_\CS  t(x_1,\ldots,x_n)\overset{1}{:} F.$$
\end{lemma}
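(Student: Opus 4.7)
The plan is to argue by induction on the length of the derivation of $F$ from $A_1,\ldots,A_n$ in $\F_\CS$, constructing the witness term $t$ compositionally from the terms produced by the induction hypothesis on the sub-derivations.

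First I would split the base step into three sub-cases according to the last step of the derivation. If $F$ is itself one of the hypotheses $A_i$, I take $t(x_1,\ldots,x_n) := x_i$ and the conclusion is precisely the hypothesis $x_i\overset{1}{:} A_i$. If $F$ is an axiom of $\F$, axiomatic appropriateness of $\CS$ delivers a constant $c$ with $c\overset{1}{:} F\in\CS$, whence $\vd_\CS c\overset{1}{:} F$ by GIAN; set $t := c$. If the derivation ends with an application of GIAN, then $F\in\CS$, so the second clause of axiomatic appropriateness again provides a constant $c$ with $c\overset{1}{:} F\in\CS$, and GIAN yields $\vd_\CS c\overset{1}{:} F$; again set $t := c$.

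For the inductive step I would handle the case that $F$ arises from $G$ and $G\r F$ by MP. The induction hypothesis supplies terms $s(x_1,\ldots,x_n)$ and $u(x_1,\ldots,x_n)$ such that
$$x_1\overset{1}{:} A_1,\ldots,x_n\overset{1}{:} A_n \vd_\CS s\overset{1}{:} G \quad\text{and}\quad x_1\overset{1}{:} A_1,\ldots,x_n\overset{1}{:} A_n \vd_\CS u\overset{1}{:} (G\r F).$$
I would then apply the derived rule JGMP' with $r = r' = 1$; since $1 *_L 1 = 1$, this gives $u\cdot s :_1 F$, which by Lemma \ref{lem:theorems of RPLJ}(6) is equivalent to $u\cdot s\overset{1}{:} F$. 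Taking $t := u\cdot s$ completes the case.

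The only real obstacle is the MP case, and the key move there is to recognise that the admissible rule JGMP' combined with the provable equivalence $t:_1 A \equiv t\overset{1}{:} A$ is precisely what is needed to propagate the certainty degree $1$ through the induction. The remaining cases are immediate consequences of axiomatic appropriateness of $\CS$ together with the rule GIAN.
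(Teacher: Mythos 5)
Your proposal is correct and follows essentially the same route as the paper's own proof: induction on the derivation, with $t:=x_i$ for hypotheses, a constant from axiomatic appropriateness for axioms and for GIAN-conclusions, and the combination of JGMP$'$ with the equivalence $t:_1 A\equiv t\overset{1}{:}A$ (Lemma \ref{lem:theorems of RPLJ}, clause 6) for the Modus Ponens case. The only difference is organizational: you treat the GIAN case as a base case rather than as part of the induction step, which is immaterial since that rule has no premises.
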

\begin{proof}
The proof is by induction on the derivation of
$F$. We have two base cases:
\begin{itemize}
 \item If $F$ is an axiom, then there is a justification constant $c$ such that $c:F\in\CS$. Put $t:= c$, and use IAN to obtain $c\overset{1}{:} F$.
  \item If $F=A_i$, then put $t:=x_i$.
\end{itemize}
For the induction step we have two cases.
\begin{itemize}
 \item Suppose $F$ is obtained by Modus Ponens from $G\r F$ and $G$. By the induction hypothesis, there are terms $u(x_1,\ldots,x_n)$
 and $v(x_1,\ldots,x_n)$ such that 
$$  x_1\overset{1}{:} A_1,\ldots,x_n\overset{1}{:} A_n \vdash_\CS  u(x_1,\ldots,x_n)\overset{1}{:} (G\r F),$$
and
 $$  x_1\overset{1}{:} A_1,\ldots,x_n\overset{1}{:} A_n \vdash_\CS  v(x_1,\ldots,x_n)\overset{1}{:} G.$$
 
 Then put $t:=u.v$, and use the rule JGMP$'$ to obtain 
 $$ x_1\overset{1}{:} A_1,\ldots,x_n\overset{1}{:} A_n \vdash_\CS  u(x_1,\ldots,x_n)\cdot v(x_1,\ldots,x_n):_1 F,$$
 which, by Lemma \ref{lem:theorems of RPLJ} clause 6, implies that
 $$  x_1\overset{1}{:} A_1,\ldots,x_n\overset{1}{:} A_n \vdash_\CS  u(x_1,\ldots,x_n)\cdot v(x_1,\ldots,x_n)\overset{1}{:} F.$$
 
 \item Suppose $F$ is obtained from the rule GIAN, so $F=c_{i_n}\overset{1}{:} c_{i_{n-1}}\overset{1}{:} \ldots:c_{i_1}\overset{1}{:} B\in\CS$, for some axiom instance $B$. Then since $\CS$ is axiomatically appropriate, there is a justification constant $c$ such that $c\overset{1}{:} c_{i_n}\overset{1}{:} c_{i_{n-1}}\overset{1}{:} \ldots:c_{i_1}\overset{1}{:} B\in\CS$. Put $t:=c$.\qed
\end{itemize}
\end{proof}

\begin{lemma}[Internalization Lemma]\label{lem:Internalization Lemma RPLJ}
Given an axiomatically appropriate constant specification $\CS$, if $  \vdash_\CS F$, then there is a justification term
$t$ such that $ \vdash_\CS t:F$.
\end{lemma}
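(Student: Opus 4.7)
The plan is to derive the Internalization Lemma as an immediate consequence of the Lifting Lemma (Lemma \ref{lem:lifting lemma RPLJ}) combined with the already-established fact that $t \overset{1}{:} F \r t{:}F$ is a theorem. Since the hypothesis $\vdash_\CS F$ is exactly the Lifting Lemma instantiated with the empty list of side assumptions (i.e.\ $n=0$), the heavy lifting has already been done in Lemma \ref{lem:lifting lemma RPLJ}.

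Concretely, I would proceed as follows. First, assume $\vdash_\CS F$. Apply Lemma \ref{lem:lifting lemma RPLJ} with $n = 0$; its proof inspects the derivation of $F$ and either uses the axiomatic appropriateness of $\CS$ (in the axiom case, producing a constant $c$ with $c\overset{1}{:}F \in \CS$), or closes under Modus Ponens via rule JGMP$'$ together with clause 6 of Lemma \ref{lem:theorems of RPLJ}, or handles the GIAN step by taking a witness constant supplied by axiomatic appropriateness. The upshot is that there is a justification term $t$ such that $\vdash_\CS t \overset{1}{:} F$.

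Next, invoke clause 7 of Lemma \ref{lem:theorems of RPLJ}, which gives $\vdash_\CS t\overset{1}{:} F \r t{:}F$ for the same term $t$. One application of Modus Ponens then yields $\vdash_\CS t{:}F$, completing the proof.

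I do not anticipate any real obstacle, as every ingredient has been prepared earlier: the Lifting Lemma handles the induction on the proof of $F$ (where axiomatic appropriateness of $\CS$ is crucial for the axiom and GIAN cases), while clause 7 of Lemma \ref{lem:theorems of RPLJ} converts the graded assertion $t\overset{1}{:}F$ into the ungraded $t{:}F$. The only subtlety worth flagging is that one must explicitly note that the Lifting Lemma is applicable with an empty premise list; this is immediate from its proof since none of the inductive cases require at least one premise to be present.
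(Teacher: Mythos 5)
Your proof is correct and follows essentially the same route as the paper, which simply states that the result is a ``special case'' of the Lifting Lemma (Lemma \ref{lem:lifting lemma RPLJ}). You in fact make explicit a step the paper leaves implicit, namely that the Lifting Lemma with $n=0$ yields $\vdash_\CS t\overset{1}{:}F$ rather than $\vdash_\CS t{:}F$, and that the passage to $t{:}F$ requires clause 7 of Lemma \ref{lem:theorems of RPLJ} together with Modus Ponens.
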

\begin{proof}
Special case of Lemma \ref{lem:lifting lemma RPLJ}.\qed
\end{proof}

Next we introduce fuzzy Fitting models for \F.
\begin{definition}
A structure $\M=(\W,\R,\E,\V)$ is a fuzzy Fitting model for $\F_\CS$ (or simply is a $\F_\CS$-model), if it is a {\sf {\L}J}-model such that  $\E$ is a fuzzy admissible evidence function $\E:\W\times Tm \times Fm_\F \r [0,1]$ satisfying the following conditions:
\begin{description}
 \item[$F\E 1.$] If $\E_w(s,A\r B)*_L \E_w(t,A)\leq\E_w(s\cdot t,B)$.
 \item[$F\E 2.$] $\E_w(s,A)\leq \E_w(t+s,A)$, $\E_w(s,A)\leq \E_w(s+t,A)$.
 \item[$F\E 3.$] If $c\overset{1}{:} F\in\CS$, then $\E_w(c,F)=1$, for every $w\in\W$.
\end{description}
The truth valuation $\V$ extends uniquely to all formulas of {\sf BLJ} as follows:
\begin{description}
\item[$\V1.$] $\V_w(\bar{r})=r$, for all $r\in \mathbb{Q}\cap[0,1]$,
\item[$\V2.$] $\V_w(A\r B)= \V_w(A)\Rightarrow_L \V_w(B)$,
\item[$\V3.$] $\V_w(A\& B)= \V_w(A)*_L \V_w(B)$,
 \item[$\V4.$] $\V_w(t:A)=\E_w(t,A) *_L \V_w^{\Box}(A)$,
\end{description}
where
\[\V_w^\Box(A) = \inf\{ \V_v(A)~|~ w\R v\}.\]
\end{definition}

\begin{definition}
\begin{itemize}
\item For a $\F_\CS$-model $\M=(\W,\R,\E,\V)$ and a set of formulas $T$, $\M$ is a model of $T$ if $\V_w(F)=1$, for all formulas $F\in T$ and all $w\in\W$.
\item We say that a formula $F$ is a semantic consequence of $T$ in $\F_\CS$, written $T\models_\CS  F$, if every $\F_\CS$-model of $T$ is a $\F_\CS$-model of $F$ (or more precisely of $\{F\}$).
\item A formula $F$ is $\F_\CS$-valid if $\models_\CS  F$.
 \end{itemize}
\end{definition}
For a model $\M=(\W,\R,\E,\V)$ sometimes we write $w\in\M$ instead of $w\in\W$.

 The truth valuation of graded justification assertions are as follows:
\begin{align}
 \V_w(t:_r A) &= r\Rightarrow_L \V_w(t:A),\label{eq:evaluation of graded formulas 1} \\
  \V_w(t:^r A) &=  \V_w(t:A) \Rightarrow_L r, \label{eq:evaluation of graded formulas 2}\\
   \V_w(t \overset{r}{:} A) &= \min(\V_w(t:_r A), \V_w(t:^r A)). \label{eq:evaluation of graded formulas 3}
\end{align}

\begin{lemma}\label{lem:truth value graded justification assertions}
Let $\M$ be an arbitrary $\F_\CS$-model and $w\in\M$ be an arbitrary world. Then, the following holds:
\begin{enumerate}
\item $\V_w(t:_r A)=1$ if{f} $\V_w(t:A)\geq r$.
\item $\V_w(t :^r A)=1$ if{f} $\V_w(t:A)\leq r$.
\item $\V_w(t \overset{r}{:} A)=1$ if{f} $\V_w(t:A)=r$.
\end{enumerate}
\end{lemma}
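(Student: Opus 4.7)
The plan is to unfold each abbreviation using the definitions introduced just before the lemma and then apply the key property of the {\L}ukasiewicz residuum, namely $x \Rightarrow_L y = 1$ iff $x \leq y$ (already recorded in Section 3). Equations \eqref{eq:evaluation of graded formulas 1}, \eqref{eq:evaluation of graded formulas 2} and \eqref{eq:evaluation of graded formulas 3} provide the computations for the three graded assertions in terms of $\V_w(t{:}A)$, so the lemma reduces to three elementary observations about $\Rightarrow_L$ and $\min$.

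For clause 1, I would start from the definition $t{:}_r A := \bar{r} \r t{:}A$, apply $\V 2$ together with $\V 1$ to get $\V_w(t{:}_r A) = r \Rightarrow_L \V_w(t{:}A)$, and then invoke the residuum characterization to conclude that this equals $1$ iff $r \leq \V_w(t{:}A)$. Clause 2 is symmetric: from $t{:}^r A := t{:}A \r \bar{r}$ one gets $\V_w(t{:}^r A) = \V_w(t{:}A) \Rightarrow_L r$, which equals $1$ iff $\V_w(t{:}A) \leq r$.

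For clause 3, I would use $t \overset{r}{:} A := (t{:}_r A) \wedge (t{:}^r A)$, so by the standard evaluation of $\wedge$ in ${\sf BL}$-based semantics (which yields $\min$) we have $\V_w(t \overset{r}{:} A) = \min(\V_w(t{:}_r A), \V_w(t{:}^r A))$. Since the minimum of two values in $[0,1]$ equals $1$ exactly when both values equal $1$, clauses 1 and 2 give $\V_w(t \overset{r}{:} A) = 1$ iff $\V_w(t{:}A) \geq r$ and $\V_w(t{:}A) \leq r$, i.e. iff $\V_w(t{:}A) = r$.

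There is no real obstacle here; the whole argument is a direct unfolding of definitions combined with the basic residuum identity. The only mild subtlety is to justify that $\V_w(A \wedge B) = \min(\V_w(A), \V_w(B))$ in the fuzzy Fitting setting, but this is inherited from the propositional fuzzy semantics reviewed in Section 3 (just after the definition of the residuum) and requires no extra work beyond citing it.
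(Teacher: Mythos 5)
Your proposal is correct and follows essentially the same route as the paper: items 1 and 2 are read off from the evaluations $\V_w(t:_r A)=r\Rightarrow_L\V_w(t:A)$ and $\V_w(t:^r A)=\V_w(t:A)\Rightarrow_L r$ via the residuum identity $x\Rightarrow_L y=1$ iff $x\leq y$, and item 3 combines them through $\min$. No gaps.
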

\begin{proof}
The proof of items 1 and 2 follows easily from (\ref{eq:evaluation of graded formulas 1}) and (\ref{eq:evaluation of graded formulas 2}). For item 3, we reason as follows:
\begin{eqnarray*}
  \V_w(t \overset{r}{:} A)=1 &~\mbox{if{f}}~& \min(\V_w(t:_r A), \V_w(t:^r A))=1\\
 &~\mbox{if{f}}~&  \V_w(t:_r A)= \V_w(t:^r A)=1\\
% &~\mbox{if{f}}~& r\Rightarrow_L \V_w(t:A) = \V_w(t:A) \Rightarrow_L r =1 \\
 &~\mbox{if{f}}~& r\leq \V_w(t:A) ~\mbox{and}~ \V_w(t:A) \leq r \\
  &~\mbox{if{f}}~& \V_w(t:A) = r. 
\end{eqnarray*}\qed
\end{proof}

\begin{theorem}[Soundness]\label{thm:soundness RPLJ}
For each set of formulas $T$, formula $A$ and constant specification $\CS$, if $T\vd_\CS  A$ then $T\models_\CS  A$.
\end{theorem}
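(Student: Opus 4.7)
The plan is to proceed by induction on the length of a derivation of $A$ from $T$ in $\F_\CS$. Fix an arbitrary $\F_\CS$-model $\M=(\W,\R,\E,\V)$ of $T$ and an arbitrary world $w\in\W$; the goal is to show $\V_w(A)=1$.

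For the base cases: if $A\in T$, then $\V_w(A)=1$ by assumption on $\M$. If $A$ is an axiom of $\RPL$, the Łukasiewicz axioms are handled via Theorem \ref{thm:completeness L-G-Pi} (every 1-tautology of $\L$ is valid under the $*_L,\Rightarrow_L$ interpretation), while the bookkeeping axioms TC1 and TC2 reduce to direct numerical identities for $*_L$ and $\Rightarrow_L$ together with clause $\V1$. For the justification axioms Appl and Sum I would reuse the computation from the proof of Theorem \ref{thm:soundness BLJ}: condition $F\E1$ together with the infimum-driven modal inequality $\V_w^\Box(B\r C)*_L \V_w^\Box(B)\leq \V_w^\Box(C)$ (cf.\ (\ref{eq:fuzzy modal e(K-axiom)})) and the monotonicity of $*_L$ yield the Appl case, while $F\E2$ combined with monotonicity of $*_L$ yields the Sum case.

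For the inductive step, Modus Ponens is standard: from $\V_w(B\r C)=\V_w(B)\Rightarrow_L \V_w(C)=1$ and $\V_w(B)=1$ one reads off $\V_w(C)=1$. The delicate case is GIAN, where $A=c_{i_n}\overset{1}{:} c_{i_{n-1}}\overset{1}{:}\ldots\overset{1}{:} c_{i_1}\overset{1}{:} B\in\CS$ with $B$ an axiom of $\F$. By Lemma \ref{lem:truth value graded justification assertions}(3), $\V_v(t\overset{1}{:} G)=1$ is equivalent to $\V_v(t:G)=1$, which by clause $\V4$ unfolds to $\E_v(t,G)=1$ together with $\V_u(G)=1$ for every $u$ with $v\R u$. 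The downward closure of $\CS$ together with $F\E3$ gives $\E_v(c_{i_j},c_{i_{j-1}}\overset{1}{:}\ldots\overset{1}{:} c_{i_1}\overset{1}{:} B)=1$ for every $v\in\W$ and every $1\leq j\leq n$. I would then run an inner induction on $j$ showing that $\V_v(c_{i_j}\overset{1}{:}\ldots\overset{1}{:} c_{i_1}\overset{1}{:} B)=1$ for every $v\in\W$; the base case $j=1$ uses the already-established fact from the outer induction that the axiom $B$ has value $1$ at every world of every $\F_\CS$-model.

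The main obstacle I anticipate is the GIAN case, because each nested $\overset{1}{:}$ is a defined abbreviation for $\bar{1}\leftrightarrow c_{i_j}:(\cdots)$ whose truth value entangles $*_L$, $\Rightarrow_L$, and the infimum in $\V_v^\Box$. The crucial reduction is Lemma \ref{lem:truth value graded justification assertions}(3), which collapses each $\overset{1}{:}$-layer to the flat condition ``value equal to $1$'', after which the inner induction on $j$ becomes essentially the IAN-style argument from Theorem \ref{thm:soundness BLJ}. A minor bookkeeping point is to keep the inner induction pointwise in $v$ (quantifying over all worlds at each step) so that $\V_v^\Box$ of the next nested formula can be shown to equal $1$.
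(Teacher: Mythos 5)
Your proposal is correct and follows essentially the same route as the paper: induction on the length of the derivation, reusing the Appl/Sum computations from Theorem \ref{thm:soundness BLJ}, checking TC1--TC2 numerically, and handling GIAN by an inner induction on $j$ whose base case rests on the validity of the axiom instance $B$ at every world. The only cosmetic difference is that you invoke Lemma \ref{lem:truth value graded justification assertions}(3) to collapse each $\overset{1}{:}$-layer, whereas the paper unfolds $\V_v(c:_1 A)$ and $\V_v(c:^1 A)$ by direct computation with $\Rightarrow_L$; these are the same argument.
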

\begin{proof}
The proof is by induction on the length of the proof of $A$. The case where $A\in T$ is obvious. The case where $A$ is an axiom or is obtained from MP is similar to the proof of Theorem \ref{thm:soundness BLJ} (in particular, it is easy to show that axioms TC1 and TC2 are $\F_\CS$-valid).

Suppose $F=c_{i_n}\overset{1}{:} c_{i_{n-1}}\overset{1}{:} \ldots\overset{1}{:} c_{i_1}\overset{1}{:} A$ is obtained by the rule GIAN. Thus $c_{i_n}\overset{1}{:} c_{i_{n-1}} \overset{1}{:}\ldots\overset{1}{:} c_{i_1}\overset{1}{:} A\in\CS$. Since $\CS$ is downward closed we have 
\[c_{i_j}\overset{1}{:} c_{i_{j-1}}\overset{1}{:} \ldots\overset{1}{:} c_{i_1}\overset{1}{:} A\in\CS,\]
for every $1\leq j\leq n$. Let $\M$ be a ${\sf BLJ}_\CS$-model, and $w\in\M$. From $F\E3$ we have $$\E_w(c_{i_j},c_{i_{j-1}}\overset{1}{:} \ldots\overset{1}{:} c_{i_1}\overset{1}{:} A)=1,$$
for every $1\leq j\leq n$. 

It is not difficult to prove, by induction on $j$, that 
$$\V_v(c_{i_j}\overset{1}{:} c_{i_{j-1}}\overset{1}{:} \ldots\overset{1}{:} c_{i_1}\overset{1}{:} A)=1,$$
for every $1\leq j\leq n$ and for every $v\in\W$. We only check the base case $j=1$. Suppose $c \overset{1}{:} A\in\CS$, and $v\in\W$. We want to show that $\V_v(c \overset{1}{:} A)=1$. It suffices to prove that $\V_v(c:_1 A)=\V_v(c :^1 A)=1$. Observe that
\[\V_v(c:A)=\E_v(c,A) *_L \V_v^\Box(A)= 1 *_L \V_v^\Box(A)= \V_v^\Box(A)= \inf\{\V_u(A)~|~v\R u\}.\]
Since $A$ is an axiom, $\V_u(A)=1$ for all $u\in\W$. Thus, $\V_v(c:A)=1$. Hence,
\[\V_v(c:_1 A)= 1 \Rightarrow_L \V_v(c:A)= 1 \Rightarrow_L 1=1,\]
\[\V_v(c:^1 A)=  \V_v(c:A)\Rightarrow_L 1= 1 \Rightarrow_L 1=1.\]
Thus, 
\[\V_v(c \overset{1}{:} A)= \min(\V_v(c:_1 A), \V_v(c :^1 A))=1.\]
This completes the base case. The induction step is routine.

Therefore, $\V_w(c_{i_n}\overset{1}{:} c_{i_{n-1}}\overset{1}{:} \ldots\overset{1}{:} c_{i_1}\overset{1}{:} A)=1.$
\qed
\end{proof}

Next we show that all principles of Milnikel's logic of uncertain justifications (\cite{Milnikel2014}) are valid in \F.
\begin{lemma}
Given an arbitrary constant specification $\CS$, the following statements are $\F_\CS$-valid:
\begin{enumerate}
\item $s:_r (A\r B)\r(t:_{r'} A \r s\cdot t:_{r *_L r'} B)$.
\item $s:_r A\r s+t:_r A$, $s:_r A\r t+s:_r A$.
\item $t:_{r'} A \r t:_r A$, where $r \leq r'$.
\end{enumerate}
\end{lemma}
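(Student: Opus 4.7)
The plan is to verify, in any $\F_\CS$-model $\M=(\W,\R,\E,\V)$ and at any world $w$, the truth value of each of the three formulas equals $1$. Since the implication in $\F$ is {\L}ukasiewicz implication, I will use the standard facts $\V_w(F\r G)=1$ iff $\V_w(F)\leq \V_w(G)$, and (by residuation) $\V_w(F\r(G\r H))=1$ iff $\V_w(F)*_L\V_w(G)\leq\V_w(H)$. Throughout, I will also use the defining equations $\V_w(s:_r A)=r\Rightarrow_L \V_w(s:A)$ and $\V_w(s:A)=\E_w(s,A)*_L \V_w^{\Box}(A)$, together with Lemma~\ref{lem:properties of L-implication} and the admissibility conditions $F\E 1,F\E 2$.

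For item~(1), after applying residuation it suffices to show
\[
\bigl(r\Rightarrow_L \V_w(s{:}(A\r B))\bigr)*_L\bigl(r'\Rightarrow_L \V_w(t{:}A)\bigr)\;\leq\;(r*_L r')\Rightarrow_L \V_w(s\cdot t{:}B).
\]
By Lemma~\ref{lem:properties of L-implication}(3), the left-hand side is bounded above by $(r*_L r')\Rightarrow_L(\V_w(s{:}(A\r B))*_L\V_w(t{:}A))$, so by the monotonicity of $\Rightarrow_L$ in its second argument (Lemma~\ref{lem:properties of L-implication}(2)) I only need
\[
\V_w(s{:}(A\r B))*_L\V_w(t{:}A)\;\leq\;\V_w(s\cdot t{:}B).
\]
This is exactly the inequality already used in the proof of Theorem~\ref{thm:soundness BLJ} for the $\mathsf{Appl}$ axiom: combine $F\E 1$ with the modal inequality (\ref{eq:fuzzy modal e(K-axiom)}) applied to $\V_w^{\Box}$, and then use associativity/commutativity of $*_L$ together with its monotonicity.

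For item~(2), after residuation I need $\V_w(s:_r A)\leq \V_w(s+t:_r A)$, i.e., $r\Rightarrow_L \V_w(s{:}A)\leq r\Rightarrow_L \V_w(s+t{:}A)$. By Lemma~\ref{lem:properties of L-implication}(2) it suffices to show $\V_w(s{:}A)\leq \V_w(s+t{:}A)$, and this reduces via $\V 4$ to $\E_w(s,A)*_L \V_w^{\Box}(A)\leq \E_w(s+t,A)*_L \V_w^{\Box}(A)$, which follows from $F\E 2$ and the monotonicity of $*_L$. The case of $t+s$ is entirely symmetric. For item~(3), using $r\leq r'$ and Lemma~\ref{lem:properties of L-implication}(1), we have $r'\Rightarrow_L x\leq r\Rightarrow_L x$ for any $x\in[0,1]$; taking $x=\V_w(t{:}A)$ gives $\V_w(t:_{r'} A)\leq \V_w(t:_r A)$ as required.

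The only mildly delicate step is item~(1), where one must keep track of the interplay between the two outer {\L}ukasiewicz implications (carrying the rational bounds $r,r'$) and the inner $\mathsf{Appl}$-style inequality on the justification values. Lemma~\ref{lem:properties of L-implication}(3) is tailor-made for exactly this; once it is invoked, the remainder is the same $F\E 1$-plus-$\Box$-axiom argument already established in Theorem~\ref{thm:soundness BLJ}, so no further work is needed.
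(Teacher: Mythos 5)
Your proposal is correct and, for items (1) and (2), follows essentially the same route as the paper: the same reduction via residuation to the inequality $\V_w(s{:}(A\r B))*_L\V_w(t{:}A)\leq\V_w(s\cdot t{:}B)$ established for the $\mathsf{Appl}$ axiom in Theorem~\ref{thm:soundness BLJ}, combined with clauses (2) and (3) of Lemma~\ref{lem:properties of L-implication}. The only divergence is item (3), where you give a direct semantic computation from the antitonicity of $\Rightarrow_L$ in its first argument, whereas the paper instead cites the theoremhood of $t:_{r'}A\r t:_r A$ (Lemma~\ref{lem:theorems of RPLJ}, clause 5) together with the soundness theorem; both are valid, and your version is slightly more self-contained.
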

\begin{proof}
Let $\M$ be an arbitrary $\F_\CS$-model and $w\in\M$ be an arbitrary world.
\begin{enumerate}
\item From the proof of soundness theorem we have
\[\V_w(s:(A\r B))*_L \V_w(t:A)\leq \V_w(s\cdot t:B).\]
By Lemma \ref{lem:properties of L-implication} (clause 2), 
\[(r *_L r') \Rightarrow_L (\V_w(s:(A\r B))*_L \V_w(t:A))\leq (r *_L r') \Rightarrow_L \V_w(s\cdot t:B).\]
By Lemma \ref{lem:properties of L-implication} (clause 3),
\[(r \Rightarrow_L \V_w(s:(A\r B))*_L (r'\Rightarrow_L \V_w(t:A))\leq (r *_L r') \Rightarrow_L \V_w(s\cdot t:B).\]
That is
\[\V_w(s:_r (A\r B))*_L \V_w(t:_{r'} A)\leq \V_w(s\cdot t:_{r*_L r'} B).\]
Hence,
\[\V_w(s:_r (A\r B))\leq \V_w(t:_{r'} A)\Rightarrow_L \V_w(s\cdot t:_{r*_L r'} B).\]
Thus, $$\V_w(s:_r (A\r B)\r(t:_{r'} A \r s\cdot t:_{r *_L r'} B))=1.$$
\item  From $F\E2$ we have 
$$\E_w(s,A)\leq \E_w(t+s,A),$$ 
$$\E_w(s,A)\leq \E_w(s+t,A).$$
From these and properties of t-norms we obtain
$$\E_w(s,A)*_L \V_w^{\Box}(A)\leq \E_w(t+s,A)*_L \V_w^{\Box}(A),$$ 
$$\E_w(s,A)*_L \V_w^{\Box}(A)\leq \E_w(s+t,A)*_L \V_w^{\Box}(A).$$
Thus,
$$\V_w(s:A)\leq \V_w(t+s:A),$$ 
$$\V_w(s:A)\leq \V_w(s+t:A).$$
By Lemma \ref{lem:properties of L-implication} (clause 2),
$$r \Rightarrow_L \V_w(s,A)\leq r \Rightarrow_L \V_w(t+s,A),$$ 
$$r \Rightarrow_L\V_w(s,A)\leq r \Rightarrow_L \V_w(s+t,A).$$
That are
$$\V_w(s:_r A)\leq \V_w(t+s:_r A),$$ 
$$\V_w(s:_r A)\leq \V_w(s+t:_r A).$$ 
\item By Lemma \ref{lem:theorems of RPLJ} (clause 5) and Theorem \ref{thm:soundness RPLJ}.\qed
\end{enumerate}
\end{proof}

\begin{definition}
Let $T$ be a set of formulas, and $A$ be a formula.
\begin{enumerate}
  \item Truth degreee of $A$ over $T$ respecting $\CS$ is defined as follows: 
  $$\| A\|^\CS_T :=\inf\{\V_w(A)| \M~\mbox{is a}~\F_\CS\mbox{-model of}~T~\mbox{and}~w\in\M\}.$$
  \item Provability degreee of $A$ over $T$ respecting $\CS$ is defined as follows: 
  $$| A|^\CS_T :=\sup\{r| T\vd_\CS  \bar{r}\r A\}.$$
\end{enumerate}
\end{definition}

\begin{definition}
Let $S$ be a set of formulas. $S$ is $\CS$-consistent if $S\not\vd_\CS\bar{0}$. $S$ is $\CS$-complete if $S\vd_\CS A\r B$ or $S\vd_\CS B\r A$, for each pair of formulas $A$, $B$.
\end{definition}

The proof of the following lemmas are similar to that given in \cite{Hajek1998} (Lemmas 3.3.7 and 3.3.8).

\begin{lemma}
If $S\not\vdash_\CS \bar{r} \r A$, then $S\cup\{ A\r \bar{r}\}$ is $\CS$-consistent.
\end {lemma}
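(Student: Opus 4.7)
My plan is to argue by contraposition. Assume $S\cup\{A\r\bar r\}$ is not $\CS$-consistent, i.e.\ $S,A\r\bar r\vd_\CS \bar 0$. By the Deduction Theorem (Lemma~\ref{lemma:Deduction Theorem}) there is some $n\geq 1$ such that
\[
S\vd_\CS (A\r\bar r)^n \r \bar 0.
\]
The goal is to massage this into $S\vd_\CS \bar r\r A$, contradicting the hypothesis.

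The key step is a purely propositional $\RPL$ manipulation. Semantically, $\V_w((A\r\bar r)^n\r\bar 0)=1$ in a {\L}ukasiewicz model forces $\V_w(A)\geq r+\tfrac1n$ (when $r+\tfrac1n\leq 1$; otherwise it forces $\V_w(A)=1$). I would realize this syntactically by using the {\L}ukasiewicz axiom~\ref{eq:Lukasiewicz axiom} (double negation), together with the standard equivalences $\neg(A\r\bar r)\equiv A\,\&\,\neg\bar r$ and $\neg(X^n)\equiv (\neg X\r(\neg X\r\ldots(\neg X\r \bar 0)\ldots))$ that hold in {\L}ukasiewicz logic, and then collapsing the rational part via the bookkeeping axioms TC1 and TC2: $\neg\bar r\equiv\overline{1-r}$ and iterated strong disjunctions of $\overline{1-r}$ reduce to the single truth constant $\overline{\min(1,n(1-r))}$. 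In combination this yields
\[
S\vd_\CS \overline{s}\r A, \qquad \text{with } s=\min(1,\,r+\tfrac{1}{n}),
\]
since rearranging $\overline{1-s}\vee A$-style expressions under $\RPL$ bookkeeping is routine.

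Since $r<s$, we have $r\Rightarrow_L s =1$, so by TC1 and Lemma~\ref{lem:theorems of BL}(1) we get $\vd_\CS \bar r\r\bar s$; chaining with $\overline{s}\r A$ through axiom BL1 (transitivity) gives $S\vd_\CS \bar r\r A$, contradicting the assumption and completing the contrapositive.

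The main obstacle is the middle syntactic step: the semantic picture is immediate once one computes with the {\L}ukasiewicz t-norm and its residuum, but carrying this out inside the calculus requires marshalling several $\RPL$ equivalences (double negation from \ref{eq:Lukasiewicz axiom}, the De Morgan-type rewriting of $\neg(X^n)$ into an iterated strong disjunction, and the bookkeeping laws TC1, TC2). Importantly, the presence of justification assertions $t\!:\!B$ in $A$ plays no role here — they are treated as atomic formulas throughout, and the argument only uses the $\RPL$ part of $\F_\CS$ combined with the Deduction Theorem already available for $\F_\CS$.
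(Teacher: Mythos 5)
Your skeleton --- contraposition, the Deduction Theorem to get $S\vd_\CS (A\r\bar r)^n\r\bar 0$, and then a purely propositional {\L}ukasiewicz/\RPL\ manipulation treating justification assertions as atoms --- is exactly the approach of H\'ajek's Lemma~3.3.7, which the paper cites rather than reproduces. The problem is the middle step, where the proposal has two genuine gaps. First, the claimed equivalence $\neg(X^n)\equiv(\neg X\r(\neg X\r\ldots(\neg X\r\bar 0)))$ is false in {\L}: by residuation (BL5a/BL5b) the correct unfolding is $X^n\r\bar 0\equiv X\r(X\r\ldots(X\r\bar 0))$, with $X$ (not $\neg X$) in the antecedents; already for $n=2$ your formula evaluates to $\min(1,2\V(X))$ while $\neg(X^2)$ evaluates to $\min(1,2(1-\V(X)))$. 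Second, and more seriously, the step from the unfolded formula to $S\vd_\CS\bar s\r A$ with $s=\min(1,r+\tfrac1n)$ is not ``routine bookkeeping'': TC1 and TC2 only collapse compounds built \emph{entirely} from truth constants, whereas here $A$ is interleaved with the constants (the relevant formula is an $n$-fold strong disjunction of $A\,\&\,\overline{1-r}$, from which the $A$'s cannot be factored out). The implication you need, $(A\,\&\,\overline{1-r})^{\oplus n}\r(\bar s\r A)$, is a $1$-tautology mixing $A$ with constants in an essential way, and you give no derivation of it; justifying it by appeal to completeness of \RPL\ would make the whole detour through $s=r+\tfrac1n$ pointless. (A minor further slip: ``since $r<s$'' fails when $r=1$, though $r\leq s$ suffices.)

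The repair is to drop the quantitative detour entirely. The schema
\[
\bigl((\varphi\r\psi)^n\r\bar 0\bigr)\r(\psi\r\varphi)
\]
contains no truth constants and is a $1$-tautology of {\L} (if $\V(\varphi)<\V(\psi)$ the antecedent evaluates to $0$; otherwise the consequent evaluates to $1$), hence a theorem schema by Theorem~\ref{thm:completeness L-G-Pi}, and its instance with $\varphi:=A$, $\psi:=\bar r$ is provable in $\F_\CS$. Applying MP to this and $S\vd_\CS(A\r\bar r)^n\r\bar 0$ yields $S\vd_\CS\bar r\r A$ directly, contradicting the hypothesis. This is, in essence, H\'ajek's argument.
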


\begin{lemma}\label{lem:properties of provability degreee}
Let $S$ be a \CS-consistent and \CS-complete set of formulas. The following properties hold:
\begin{enumerate}
   \item $|A\r B|^\CS_S=|A|^\CS_S \Rightarrow_L |B|^\CS_S$.\vspace{0.1cm}
  \item $|A\& B|^\CS_S=|A|^\CS_S *_L |B|^\CS_S$.
\end{enumerate}
\end{lemma}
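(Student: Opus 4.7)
Write $a=|A|^\CS_S$ and $b=|B|^\CS_S$. I prove each identity by two inequalities, following the pattern of H\'ajek's argument for \RPL. The principal syntactic tools are the bookkeeping axioms \textbf{TC1}, \textbf{TC2}, the product‑monotonicity law of Lemma~\ref{lem:theorems of BL} clause 7, graded modus ponens (\emph{GMP}), and the linearity afforded by \CS-completeness. A preparatory observation used in both clauses is the duality
\[
|F|^\CS_S \;=\; \inf\{q\in\mathbb{Q}\cap[0,1]\mid S\vd_\CS F\r\bar{q}\}
\]
for every formula $F$. Indeed, chaining $\bar{r}\r F$ with $F\r\bar{q}$ produces $\overline{r\Rightarrow_L q}$; if $r>q$ then iterating strong conjunction on the resulting $\overline{1-r+q}$ eventually forces $\bar{0}$, contradicting \CS-consistency, so $r\le q$. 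The reverse inequality follows at once from \CS-completeness applied to $F$ and each rational truth constant $\bar{q}$.

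\textbf{Clause 1.} For $|A\r B|^\CS_S\le a\Rightarrow_L b$, whenever $S\vd_\CS \bar{s}\r(A\r B)$ and $S\vd_\CS \bar{r}\r A$ hold, \emph{GMP} together with \textbf{TC2} yields $S\vd_\CS \overline{s*_L r}\r B$, so $s*_L r\le b$; a supremum first over $r$ (continuity of $*_L$ gives $s*_L a\le b$) and then over $s$ (residuation) delivers the bound. For the reverse inequality, apply \CS-completeness to $A$ and $B$. If $S\vd_\CS A\r B$, transitivity with each $\bar{r}\r A$ forces $a\le b$ so $a\Rightarrow_L b=1$, while Lemma~\ref{lem:theorems of BL} clause 2 with MP lifts $A\r B$ to $\bar{s}\r(A\r B)$ for every rational $s$, giving $|A\r B|^\CS_S=1$. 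Otherwise $S\vd_\CS B\r A$: for each rational $r>a$, the duality and \CS-completeness supply $S\vd_\CS A\r\bar{r}$, whence $(\bar{r}\r B)\r(A\r B)$ by \textbf{BL1}; for each rational $s'<b$, $S\vd_\CS \bar{s'}\r B$, and the consequent‑monotonicity instance $(\bar{s'}\r B)\r((\bar{r}\r\bar{s'})\r(\bar{r}\r B))$ combined with \textbf{TC1} (rewriting $\bar{r}\r\bar{s'}$ as $\overline{1-r+s'}$ when $r>s'$) delivers $\overline{1-r+s'}\r(A\r B)$. Letting $r\downarrow a$ and $s'\uparrow b$ through rationals gives $|A\r B|^\CS_S\ge a\Rightarrow_L b$.

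\textbf{Clause 2 and main obstacle.} For $|A\& B|^\CS_S\ge a*_L b$, Lemma~\ref{lem:theorems of BL} clause 7 applied to $(\bar{r}\r A)$ and $(\bar{s}\r B)$ yields $(\bar{r}\&\bar{s})\r(A\& B)$, which \textbf{TC2} rewrites as $\overline{r*_L s}\r(A\& B)$; taking suprema closes this half. For $|A\& B|^\CS_S\le a*_L b$, the duality above reduces the task to exhibiting $(A\& B)\r\overline{r*_L s}$ for all rationals $r>a$ and $s>b$; this follows from the same Lemma~\ref{lem:theorems of BL} clause 7 applied to $(A\r\bar{r})$ and $(B\r\bar{s})$ followed by \textbf{TC2}, after which infima over $r,s$ finish the proof. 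The most delicate step throughout is the syntactic splicing in the $B\r A$ subcase of Clause~1: one must weld the upper bound on $A$ to the lower bound on $B$ through \textbf{TC1} and consequent‑monotonicity so that the rational truth constant produced is precisely $\overline{1-r+s'}$, which is what allows the bound to approach $a\Rightarrow_L b=1-a+b$ in the limit.
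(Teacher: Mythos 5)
Your proof is correct and follows essentially the route the paper intends: the paper gives no argument of its own here but defers to H\'ajek's Lemmas 3.3.7--3.3.8, and your write-up is a faithful reconstruction of that standard Pavelka-style computation (the duality $|F|^\CS_S=\inf\{q\mid S\vd_\CS F\r\bar{q}\}$ obtained from $\CS$-completeness and $\CS$-consistency, followed by two inequalities per clause using TC1, TC2, GMP and Lemma~\ref{lem:theorems of BL}). The only point worth a remark is the boundary case $a=1$ (resp.\ $b=0$), where no rational $r>a$ (resp.\ $s'<b$) exists; it is covered by the always-provable $A\r\bar{1}$ (resp.\ the axiom $\bar{0}\r B$), so nothing breaks.
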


Moreover, it can be shown that every $\CS$-consistent set of formulas can be extended to a \CS-consistent and \CS-complete set.

\begin{definition}[Canonical model]\label{def:canonical model}
The \textit{canonical model} $\M^c=(\W^c,\R^c,\E^c,\V^c)$ for $\F_\CS$ is defined as follows:
\begin{enumerate}
  \item $\W^c = \{ S|S~\mbox{is a $\CS$-consistent and $\CS$-complete set}\}$.\vspace{0.1cm}
  \item $S_1 \R^c S_2$ if{f} $S_1^\sharp=\{A|t:A\in S_1\} \subseteq S_2$, for $S_1,S_2\in\W^c$.\vspace{0.1cm}
    \item $\E_S^c(t,A):= |t:A|^\CS_S$, for $S\in\W^c$.\vspace{0.1cm}
    \item $\V_S^c(p):=|p|^\CS_S$, for $S\in\W^c$ and $p\in\mathcal{P}$.
\end{enumerate}
\end{definition}

\begin{lemma}[Truth Lemma]\label{lem:Truth Lemma}
Let $\M^c=(\W^c,\R^c,\E^c,\V^c)$ be the canonical model of $\F_\CS$. For every formula $F$  and every $S\in\W^c$, if $F\in S$, then
\[\V_S^c(F)=|F|^\CS_S = 1.\]
\end{lemma}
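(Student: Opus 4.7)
The strategy is to first establish the stronger Truth Lemma $\V_S^c(F) = |F|^\CS_S$ for every formula $F$ and every $S\in\W^c$, and then deduce the stated special case. The fact that $F \in S$ implies $|F|^\CS_S = 1$ is the easy half: if $F \in S$ then $S \vd_\CS F$, and combining this with the theorem $F \r (\bar{1}\r F)$ (an instance of Lemma \ref{lem:theorems of BL}, clause 2) via modus ponens yields $S \vd_\CS \bar{1}\r F$, so $|F|^\CS_S \geq 1$, whence $|F|^\CS_S = 1$.

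The Truth Lemma itself is proved by structural induction on $F$. For $F = p$ a propositional variable, the equality $\V_S^c(p)=|p|^\CS_S$ holds by definition. For $F = \bar{r}$, I would show $|\bar{r}|^\CS_S = r$: the $\geq$ direction follows from the bookkeeping axiom TC1, which gives $\vd_\CS \bar{s}\r\bar{r}$ for every rational $s\leq r$; for the $\leq$ direction, if $S$ proved $\bar{s}\r\bar{r}$ with $s > r$, then by TC1 this would amount to $S \vd_\CS \bar{u}$ for $u = s \Rightarrow_L r < 1$, and nilpotency of $*_L$ (iterating strong conjunction via the theorem $\bar{u}\r(\bar{u}\r\bar{u}\&\bar{u})$ and TC2) would yield $S \vd_\CS \bar{0}$, violating $\CS$-consistency. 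For $F = A\r B$ and $F = A\&B$, the inductive step is immediate from Lemma \ref{lem:properties of provability degreee} together with the inductive hypothesis and the recursive clauses defining $\V^c$.

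The remaining and decisive case is $F = t:A$. Unfolding the definitions and applying the inductive hypothesis to $A$, one obtains
$$\V_S^c(t:A) = \E^c_S(t,A) *_L \V_S^{c,\Box}(A) = |t:A|^\CS_S *_L \inf\{|A|^\CS_T : S \R^c T,\ T\in\W^c\}.$$
The upper bound $\V_S^c(t:A) \leq |t:A|^\CS_S$ is immediate from $x *_L y \leq x$. For the matching lower bound it suffices to prove that whenever $|t:A|^\CS_S > 0$, every $T\in\W^c$ with $S^\sharp\subseteq T$ satisfies $|A|^\CS_T = 1$, or equivalently $A \in T$.

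This last point is the \emph{main obstacle}. A positive value of $|t:A|^\CS_S$ only provides some $r > 0$ with $\bar{r}\r t{:}A \in S$, i.e., the \emph{graded} assertion $t:_r A$ lies in $S$; what we need is some term $t'$ with $t'{:}A \in S$, which would place $A$ in $S^\sharp$ and hence in every accessible $T$. Bridging this gap will require a careful combination of the axiomatically appropriate constant specification (so that the Internalization Lemma \ref{lem:Internalization Lemma RPLJ} is available to lift derivations $S^\sharp \vd_\CS A$ into internal justifications), the $\CS$-completeness of $S$ (to rule out ``fractional'' proofs of $t{:}A$ that do not descend to any outright $t'{:}A$), and a Lindenbaum-style completion showing that the set $\{A \mid$ some $t'{:}A \in S\}$ is dense enough in every $T\supseteq S^\sharp$; once this is in place the computation above collapses to $|t:A|^\CS_S *_L 1 = |t:A|^\CS_S$, closing the induction.
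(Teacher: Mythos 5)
Your proposal aims at the stronger, unconditional identity $\V_S^c(F)=|F|^\CS_S$ for \emph{every} formula $F$, and you correctly isolate where that stronger claim becomes problematic --- but you do not close the case $F=t{:}A$, so the proof is incomplete at exactly the decisive step. The bridge you hope for, namely that $|t{:}A|^\CS_S>0$ forces $A$ to belong to every $T\in\W^c$ with $S^\sharp\subseteq T$, does not follow from the canonical model of Definition~\ref{def:canonical model}: a graded assertion $\bar{r}\r t{:}A\in S$ with $0<r<1$ contributes to $|t{:}A|^\CS_S$ without putting $A$ into $S^\sharp=\{A\mid t{:}A\in S\}$, so nothing constrains the value of $A$ at the $\R^c$-successors, and the product $\E^c_S(t,A)*_L\V_S^{c\Box}(A)$ can drop strictly below $|t{:}A|^\CS_S$. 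The repair tools you invoke (Internalization, axiomatic appropriateness) are not even available here, since the lemma is stated for an arbitrary constant specification $\CS$. So the sketch in your final paragraph is a statement of the difficulty, not a resolution of it.

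The paper sidesteps this obstacle by proving only the conditional statement and exploiting the hypothesis $F\in S$ in the justification case: if $t{:}A\in S$ then $|t{:}A|^\CS_S=1$ (so $\E^c_S(t,A)=1$), and by the definition of $\R^c$ the formula $A$ lies in every accessible $T$, whence the induction hypothesis applied to $A\in T$ gives $\V^c_T(A)=1$, so $\V_S^{c\Box}(A)=1$ and $\V_S^c(t{:}A)=1*_L 1=1=|t{:}A|^\CS_S$. You should restructure your induction so that membership in the relevant world is carried along as a hypothesis; that weaker statement is all the completeness argument later needs. Your easy half ($F\in S$ implies $|F|^\CS_S=1$), your base cases, and in particular your verification that $|\bar{r}|^\CS_S=r$ via the bookkeeping axioms and $\CS$-consistency (which the paper asserts without argument) are correct and match, or improve on, the paper's treatment; but be aware that under the conditional formulation the connective cases also require care, since $A\r B\in S$ does not yield $A\in S$ and $B\in S$, so the appeal to Lemma~\ref{lem:properties of provability degreee} must be combined with a suitably formulated inductive hypothesis there as well.
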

\begin{proof}
First note that for every formula $F$ in $S$, we have $S\vdash_\CS \bar{1}\r F$. Hence $|F|^\CS_S=1$. Next by induction on the complexity of $F$ we show that for every $S\in\W^c$ if $F\in S$, then 
\[\V_S^c(F)=|F|^\CS_S.\]
 The case when $F$ is a propositional variable $p$ follows from Definition \ref{def:canonical model}. If $F=\bar{r}$ and $S\in\W^c$, then $\V_S^c(\bar{r})=r$, and by definition of provability degreee $|\bar{r}|_S^\CS=r$. Thus $$\V_S^c(\bar{r})=|\bar{r}|_S^\CS=r.$$ 
The cases for which the main connective of $F$ is $\&$ or $\r$ follows from Lemma \ref{lem:properties of provability degreee}. Finally, for $S\in\W^c$ if $F=t:A$ is in $S$, then $|t:A|^\CS_S =1$. Furthermore, for any $S'\in\W$ such that $S \R^c S'$, by definition of $\R^c$, we have $A\in S'$. By the induction hypothesis, $\V_{S'}^c(A)=|A|_{S'}^\CS =1$. Thus, 
 \[\V_S^{c\Box}(A) = \inf \{ \V_{S'}^c(A)~|~ S\R^c S'\} =1.\]
 Therefore,
\[\V_S^c(t:A)=\E_S^c(t,A)*_L \V_S^{c\Box}(A)=|t:A|^\CS_S *_L \V_S^{c\Box}(A)= 1 *_L 1=1.\]
Hence, $\V_S^c(t:A)=|t:A|^\CS_S$.\qed
\end{proof}

\begin{lemma}
The canonical model $\M^c=(\W^c,\R^c,\E^c,\V^c)$ is a model of $\F_\CS$.
\end{lemma}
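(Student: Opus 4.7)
The plan is to verify each clause in the definition of an $\F_\CS$-model for $\M^c$. Non-emptiness of $\W^c$ follows from the fact that any $\CS$-consistent set (e.g., $\emptyset$, assuming $\F_\CS$ is consistent) can be extended to a $\CS$-consistent and $\CS$-complete one by the extension claim remarked just before Definition \ref{def:canonical model}; and $\R^c$ is a binary relation by construction. The truth valuation $\V^c$ is fixed on propositional variables by Definition \ref{def:canonical model} and automatically extends to all formulas via $\V1$--$\V4$ once the fuzzy admissible evidence function $\E^c$ is in place. So the content is to verify $F\E1$, $F\E2$, $F\E3$ for $\E^c_S(t,A) := |t:A|^\CS_S$.

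For $F\E1$, fix $S \in \W^c$ and let $r = |s:(A\r B)|^\CS_S$, $r' = |t:A|^\CS_S$. For any rationals $q < r$ and $q' < r'$ we have $S \vdash_\CS \bar{q}\r s:(A\r B)$ and $S \vdash_\CS \bar{q'}\r t:A$; applying the admissible rule JGMP yields $S \vdash_\CS \overline{q*_L q'}\r s\cdot t:B$, so $q *_L q' \leq |s\cdot t:B|^\CS_S$. Letting $q \to r^-$ and $q' \to r'^-$ through the rationals and invoking continuity of $*_L$ gives $r *_L r' \leq |s\cdot t:B|^\CS_S$, which is exactly $\E^c_S(s,A\r B) *_L \E^c_S(t,A) \leq \E^c_S(s\cdot t,B)$. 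This continuity/approximation step is the one place that needs a little care; it is the main, and only, subtle point in the argument.

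For $F\E2$, the same recipe is simpler: if $S \vdash_\CS \bar{r}\r s:A$, then applying Mon$_1$ and Mon$_2$ yields $S \vdash_\CS \bar{r}\r s+t:A$ and $S \vdash_\CS \bar{r}\r t+s:A$, so $|s:A|^\CS_S \leq |s+t:A|^\CS_S$ and $|s:A|^\CS_S \leq |t+s:A|^\CS_S$. For $F\E3$, if $c\overset{1}{:}F \in \CS$, then by GIAN $\vdash_\CS c\overset{1}{:}F$, hence by Lemma \ref{lem:theorems of RPLJ}(6) $\vdash_\CS c:_1 F$, i.e.\ $\vdash_\CS \bar{1}\r c:F$; therefore $\E^c_S(c,F) = |c:F|^\CS_S = 1$ for every $S \in \W^c$.

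Thus the verification reduces to three routine chases through the derivable rules JGMP, Mon$_1$/Mon$_2$, and GIAN, together with the sup/inf definition of the provability degree. The only genuine work is the continuity step in $F\E1$; all other clauses of the model definition are immediate from the constructions.
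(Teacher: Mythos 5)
Your proof is correct, and it follows the same overall decomposition as the paper: the only substantive content is checking $F\E1$--$F\E3$ for $\E^c_S(t,A)=|t:A|^\CS_S$, and your treatments of $F\E2$ (via Mon$_1$/Mon$_2$ and BL1) and $F\E3$ (via GIAN and Lemma \ref{lem:theorems of RPLJ}) coincide with the paper's up to which clause of Lemma \ref{lem:theorems of RPLJ} is cited. Where you genuinely diverge is $F\E1$. The paper introduces the threshold sets $X=\{r\mid S\vd_\CS\bar{r}\r s:(A\r B)\}$, $Y=\{r\mid S\vd_\CS\bar{r}\r t:A\}$, $Z=\{r\mid S\vd_\CS\bar{r}\r s\cdot t:B\}$, asserts $X\cap Y\subseteq Z$, and then exploits the fact that $X$ and $Y$ are downward closed (so one contains the other) together with $x*_L y\leq\min(x,y)$ to land on the required inequality. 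You instead approximate $\sup X$ and $\sup Y$ from below by rationals $q,q'$, feed these into JGMP to get $q*_L q'\in Z$, and pass to the limit using continuity and monotonicity of $*_L$. Your route delivers $|s:(A\r B)|^\CS_S *_L |t:A|^\CS_S\leq|s\cdot t:B|^\CS_S$ directly and only uses what JGMP actually produces, namely membership of $q*_L q'$ in $Z$; the paper's inclusion $X\cap Y\subseteq Z$ asks for the stronger conclusion $r\in Z$ from $r\in X\cap Y$, whereas JGMP only yields $r*_L r\in Z$, so your version is in fact the more careful one at precisely the point you identify as the only subtle step. Two small housekeeping remarks: when $\sup X=0$ or $\sup Y=0$ the set of admissible rationals $q$ is empty, but then $r*_L r'=0\leq\sup Z$ holds trivially since $0\in Z$ by BL7; and you should note (as the paper does implicitly) that JGMP and Mon$_1$/Mon$_2$ remain admissible for derivations from the hypothesis set $S$, which is clear since their derivations use only axioms and MP.
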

\begin{proof}
 It suffices to show that $\E^c$ satisfies conditions $F\E1-F\E3$. Suppose $S$ is an arbitrary world in $\W^c$. 
\begin{itemize}
   \item Condition $F\E1$: let $$X=\{r~|~S\vd_{\CS} \bar{r}\r s:(A\r B)\},$$ $$Y=\{r~|~S\vd_{\CS} \bar{r}\r t:A\},$$ and $$Z=\{r~|~S\vd_{\CS} \bar{r}\r s\cdot t:B\}.$$ It is easy to show that $X\cap Y\subseteq Z$, and hence $\sup(X\cap Y)\leq \sup(Z)$. On the other hand, one can verify that $X$ and $Y$ have the following property: if $r$ is in $X$ (or $Y$) and $r'\leq r$, then $r'$ is in $X$ (or $Y$). Therefore, $X\subseteq Y$ or $Y\subseteq X$. If $X\subseteq Y$, then $\sup(X\cap Y)=\sup(X)\leq \sup(Z)$, which yields $$|s:(A\r B)|^\CS_S\leq |s\cdot t:B|^\CS_S.$$ Since $x*_L y\leq x$, for all $x,y\in[0,1]$, we have  
  $$|s:(A\r B)|^\CS_S *_L |t:A|^\CS_S\leq |s\cdot t:B|^\CS_S.$$ 
  Hence 
  $$\E_S^c(s,A\r B)*_L \E_S^c(t,A)\leq\E_S^c(s\cdot t,B).$$
If $Y\subseteq X$, then $\sup(X\cap Y)=\sup(Y)\leq \sup(Z)$, which yields $$|t:A|^\CS_S\leq |s\cdot t:B|^\CS_S.$$ 
Since $x*_L y\leq y$, for all $x,y\in[0,1]$, we have    
  $$|s:(A\r B)|^\CS_S *_L |t:A|^\CS_S\leq |s\cdot t:B|^\CS_S.$$  Hence 
  $$\E_S^c(s,A\r B)*_L \E_S^c(t,A)\leq\E_S^c(s\cdot t,B).$$
     \item Condition $F\E2$: it is easy to show that 
     $$\{r| S\vd_\CS \bar{r}\r s:A\}\subseteq \{r| S\vd_\CS \bar{r}\r s+t:A\},$$
  $$\{r| S\vd_\CS \bar{r}\r s:A\}\subseteq \{r| S\vd_\CS \bar{r}\r t+s:A\}.$$
   Thus $$|s:A|^\CS_S \leq |s+t:A|^\CS_S,$$ $$|s:A|^\CS_S \leq |t+s:A|^\CS_S.$$ Hence $$\E_S^c(s,A)\leq \E_S^c(s+t,A),$$ 
   $$\E_S^c(s,A)\leq \E_S^c(t+s,A).$$ 
    \item Condition $F\E3$: Suppose $c \overset{1}{:} F \in \CS$. Then, by GIAN, we have $\vdash_\CS c \overset{1}{:} F$. Thus, by Lemma \ref{lem:theorems of RPLJ} item 7, $\vdash_\CS c:F$. Hence, $\vdash_\CS \bar{1}\r c:F$. Therefore, 
    \[\E_S^c(c,F) = |c:F|_S^\CS = \sup \{r|S \vdash_\CS \bar{r} \r c:F\} =1.\]
    This completes the proof.\qed
\end{itemize}
\end{proof}

\begin{theorem}[Graded-style completeness]
For each set of formulas $T$, formula $A$ and constant specification $\CS$:
\[\| A\|^\CS_T =|A|^\CS_T.\]
\end{theorem}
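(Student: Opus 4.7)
The plan is to prove the two inequalities $|A|^\CS_T \leq \|A\|^\CS_T$ and $\|A\|^\CS_T \leq |A|^\CS_T$ separately. The first is a direct consequence of soundness: for any rational $r$ with $T \vd_\CS \bar{r}\r A$, Theorem \ref{thm:soundness RPLJ} gives $\V_w(\bar{r}\r A) = r \Rightarrow_L \V_w(A) = 1$ in every $\F_\CS$-model $\M$ of $T$ and every $w\in\M$, so $r \leq \V_w(A)$; taking the infimum yields $r \leq \|A\|^\CS_T$, and then the supremum over such $r$ gives $|A|^\CS_T \leq \|A\|^\CS_T$.

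For the converse inequality, by density of $\mathbb{Q}$ in $[0,1]$ it suffices to show $\|A\|^\CS_T \leq r$ for every rational $r \in \mathbb{Q} \cap [0,1]$ with $r > |A|^\CS_T$. Fix such an $r$; then $T \not\vd_\CS \bar{r} \r A$, so the lemma preceding Lemma \ref{lem:properties of provability degreee} shows that $T \cup \{A \r \bar{r}\}$ is $\CS$-consistent, and by the extension remark following it, this set is contained in some $\CS$-consistent, $\CS$-complete set $S^*$. I then form a relativized canonical model $\M^c_T = (\W^c_T, \R^c_T, \E^c, \V^c)$ with $\W^c_T := \{S \in \W^c \mid T \subseteq S\}$ and $\R^c_T := \R^c \cap (\W^c_T \times \W^c_T)$, inheriting $\E^c$ and the propositional valuation from Definition \ref{def:canonical model}. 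The conditions $F\E 1$--$F\E 3$ are per-world and thus survive the restriction, so $\M^c_T$ is a valid $\F_\CS$-model.

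A routine adaptation of the Truth Lemma's induction shows that for every $S \in \W^c_T$ and every $F \in S$ one has $\V_S(F) = 1$: the only subtle case is $F = t:A$, where any $S' \in \W^c_T$ with $S \R^c_T S'$ still contains $A$ (as $\R^c_T \subseteq \R^c$), so by induction $\V_{S'}(A) = 1$, giving $\V_S^\Box(A) = 1$ and hence $\V_S(t:A) = \E^c_S(t,A) = |t:A|^\CS_S = 1$. In particular, every $F \in T$ satisfies $\V_S(F) = 1$ for each $S \in \W^c_T$, so $\M^c_T$ is a model of $T$; applying the same fact to $S^*$ with $A \r \bar{r} \in S^*$ yields $\V_{S^*}(A \r \bar{r}) = 1$, i.e.\ $\V_{S^*}(A) \leq r$, whence $\|A\|^\CS_T \leq \V_{S^*}(A) \leq r$. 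The main delicacy is verifying that the Truth Lemma survives the restriction of worlds and accessibility, but since accessibility only enters the inductive step through the membership condition ``$A \in S'$ whenever $t:A \in S$ and $S$ is related to $S'$''---a condition preserved when the relation is shrunk---the adaptation goes through routinely.
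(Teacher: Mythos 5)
Your proof follows the same two-step strategy as the paper's: the inequality $|A|^\CS_T \leq \|A\|^\CS_T$ is extracted from the soundness theorem exactly as in the text, and the converse is obtained by fixing a rational $r$ with $T\not\vdash_\CS \bar{r}\r A$, extending the $\CS$-consistent set $T\cup\{A\r\bar{r}\}$ to a $\CS$-complete $S^*$, and reading off $\V_{S^*}(A)\leq r$ via the Truth Lemma. The one substantive difference is your relativization of the canonical model to $\W^c_T=\{S\in\W^c \mid T\subseteq S\}$. The paper works in the full canonical model $\M^c$ and concludes directly that $r\not<\|A\|^\CS_T$; but since $\|A\|^\CS_T$ is an infimum over models \emph{of} $T$, and the paper's definition of ``model of $T$'' requires every formula of $T$ to take value $1$ at \emph{every} world, the unrestricted canonical model is not literally a model of $T$ (worlds not containing $T$ may falsify it). Your restriction repairs this, and your check that the Truth Lemma survives it is the right one: accessibility enters the $t:A$ case only through the inclusion $\{B \mid t:B\in S\}\subseteq S'$, which is preserved when the relation shrinks, so $\V_S^\Box(A)$ can only increase. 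In short, your argument is essentially the paper's, made slightly more careful at the point where the witnessing model must be certified as a model of $T$; both versions share the same reliance on the (somewhat informally stated) induction behind the Truth Lemma for the $\&$ and $\r$ cases.
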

\begin{proof}
We first prove that $|A|^\CS_T <\|A\|^\CS_T$. Suppose $|A|^\CS_T=r$. Suppose $r'$ is an arbitrary element of $\{r| T\vd_\CS \bar{r}\r A\}$, i.e. 
$$T\vd_\CS \bar{r'}\r A.$$ 
By Theorem \ref{thm:soundness RPLJ} we obtain $r'\leq \V_w(A)$, for every $\F_\CS$-model $\M$ of $T$, and every $w\in\M$. Thus $r'\leq \|A\|^\CS_T$. It follows that $r\leq \|A\|^\CS_T$, that is $|A|^\CS_T <\|A\|^\CS_T$.

For the converse, we show that for all $r\in \mathbb{Q}\cap[0,1]$ if $r<\|A\|^\CS_T$ then $T\vd_\CS \bar{r}\r A$ (this implies that $\|A\|^\CS_T < |A|^\CS_T$), or rather its contrapositive. Suppose $T\not\vd_\CS \bar{r}\r A$. Hence $T\cup\{A\r \bar{r}\}$ is $\CS$-consistent, and can be extended to a $\CS$-consistent $\CS$-complete set $S$. Now consider the canonical model $\M^c=(\W^c,\R^c,\E^c,\V^c)$ of $\F_\CS$. Clearly $S\in\W^c$.  By the Truth Lemma (Lemma \ref{lem:Truth Lemma}), $\M^c$ is a model of $S$, and thus $\V_S^c(A\r\bar{r})=1$. Therefore, $\V_S^c(A)\leq r$, and hence $r\not<\|A\|^\CS_T$.\qed 
\end{proof}

\begin{corollary}[Fuzzy completeness]
For each set of formulas $T$, formula $A$ and constant specification $\CS$, if $T\models_\CS A$ then for every $n\geq 1$:
$$T\vdash_\CS \overline{(1-\frac{1}{n})}\r A.$$
\end{corollary}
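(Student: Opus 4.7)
The plan is to read the corollary as an immediate consequence of the Graded-style completeness theorem, together with the bookkeeping axioms for truth constants. First I would translate the semantic hypothesis into a statement about truth degrees: the assumption $T\models_\CS A$ says that $\V_w(A)=1$ for every $\F_\CS$-model $\M$ of $T$ and every $w\in\M$, so by the definition of the truth degree,
\[
\|A\|^\CS_T = \inf\{\V_w(A)\mid \M\text{ is an }\F_\CS\text{-model of }T,\ w\in\M\} = 1.
\]
Applying the Graded-style completeness theorem just proved, one then gets
\[
|A|^\CS_T = \sup\{r\in\mathbb{Q}\cap[0,1]\mid T\vd_\CS \bar{r}\r A\}=1.
\]

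The next step is to turn this supremum statement into provability of the specific graded formula $\overline{(1-\frac{1}{n})}\r A$. Fix $n\geq 1$. Since $1-\tfrac{1}{n}<1$ and the supremum of the set $\{r\mid T\vd_\CS \bar{r}\r A\}$ equals $1$, I can pick a rational $r\in\mathbb{Q}\cap[0,1]$ with $1-\tfrac{1}{n}\leq r$ and $T\vd_\CS \bar{r}\r A$. (For $n=1$ one can simply take $r=0$ and use axiom BL7.)

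Finally I would use the bookkeeping axiom TC1: since $1-\tfrac{1}{n}\leq r$ we have $(1-\tfrac{1}{n})\Rightarrow_L r = 1$, and so TC1 together with the provability of $\bar{1}$ (Lemma on theorems of {\sf BL}) yields $\vd_\CS \overline{(1-\tfrac{1}{n})}\r \bar{r}$. Combining this with $T\vd_\CS \bar{r}\r A$ via axiom BL1 and two applications of MP gives the desired
\[
T\vd_\CS \overline{(1-\tfrac{1}{n})}\r A.
\]

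There is no real obstacle here; the content is entirely carried by the Graded-style completeness theorem. The only point that needs minor care is the selection of the rational witness $r$ from the supremum and the explicit use of TC1 to bridge between $\overline{(1-\frac{1}{n})}$ and $\bar{r}$.
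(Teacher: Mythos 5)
Your proof is correct and takes essentially the same route as the paper's: reduce $T\models_\CS A$ to $\|A\|^\CS_T=1$, apply the graded-style completeness theorem to get $|A|^\CS_T=1$, extract a rational witness $r\geq 1-\frac{1}{n}$ from the supremum, and chain $\overline{(1-\frac{1}{n})}\r\bar{r}$ (via TC1) with $\bar{r}\r A$ using BL1 and MP. The only cosmetic difference is that you spell out the TC1 step and the $n=1$ case, which the paper leaves implicit.
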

\begin{proof}
Note that $T\models_\CS  A$ implies that $\| A\|^\CS_T=1$. Thus, by the graded-style completeness, $| A|^\CS_T=1$. Therefore, for every $n\geq 1$ there is $r\in \mathbb{Q}\cap[0,1]$ such that $T\vd_\CS \bar{r}\r A$ and $1-\frac{1}{n} \leq r <1$. Since $\vdash_\CS \overline{(1-\frac{1}{n})}\r \bar{r}$, by axiom BL1 it follows that $T\vdash_\CS \overline{(1-\frac{1}{n})}\r A.$\qed
\end{proof}

\begin{lemma}\label{lem:every cons set has model RPLJ}
Every $\CS$-consistent set of \F-formulas have a $\F_\CS$-model.
\end{lemma}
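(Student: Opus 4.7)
The plan is to apply the canonical model machinery developed in the three preceding results, invoking essentially only two tools: a Lindenbaum-style extension to a $\CS$-complete theory, and the Truth Lemma. The lemma will serve as the final bookkeeping step that stitches this machinery together.

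Given a $\CS$-consistent set $T$, I would first extend $T$ to a $\CS$-consistent and $\CS$-complete set $T^*$, using the Lindenbaum-style iterative construction noted immediately before Definition \ref{def:canonical model}. At each stage one takes a pair $(A,B)$ from a fixed enumeration of $\F$-formulas and, using $\vdash_\CS (A\r B)\vee (B\r A)$ (Lemma \ref{lem:theorems of BL} clause 8) together with the $\CS$-consistency of the current stage, adds exactly one of $A\r B$ or $B\r A$ to the current set without destroying $\CS$-consistency. The union of all stages is the desired $T^*$.

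By Definition \ref{def:canonical model}, $T^*$ belongs to the universe $\W^c$ of the canonical model $\M^c=(\W^c,\R^c,\E^c,\V^c)$, and the lemma immediately preceding the one we are proving already verifies that $\M^c$ is a bona fide $\F_\CS$-model. The Truth Lemma (Lemma \ref{lem:Truth Lemma}) applied at $T^*$ yields $\V_{T^*}^c(F)=|F|_{T^*}^\CS=1$ for every $F\in T^*$; since $T\subseteq T^*$, this gives $\V_{T^*}^c(F)=1$ for every $F\in T$. Hence $\M^c$ (realized at the world $T^*$) is a $\F_\CS$-model satisfying all formulas of $T$, as required.

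I do not expect any serious obstacle: the lemma is a purely structural consequence of the canonical model construction plus Lindenbaum extension, with no new truth-functional computations needed. The only step deserving some care is verifying that the Lindenbaum procedure still works in the graded setting, but this is essentially identical to the classical argument and to the one used by Hajek for {\sf RPL} in \cite{Hajek1998}; the $\F_\CS$-specific ingredients (constants, justification assertions) play no role beyond being formulas to be enumerated.
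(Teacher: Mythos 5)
Your proposal is correct and follows essentially the same route as the paper: extend $T$ to a $\CS$-consistent, $\CS$-complete set, observe that this set is a world of the canonical model, and apply the Truth Lemma to conclude that every formula of $T$ receives value $1$ there. The extra detail you give on the Lindenbaum construction is fine but not needed beyond what the paper already asserts before Definition~\ref{def:canonical model}.
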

\begin{proof}
Suppose $T$ is a $\CS$-consistent set of \F-formulas. Then it can be extended to a $\CS$-consistent and $\CS$-complete set $S$. Now consider the canonical model $\M^c=(\W^c,\R^c,\E^c,\V^c)$ of $\F_\CS$. Clearly $S\in\W^c$.  By the Truth Lemma (Lemma \ref{lem:Truth Lemma}), $\M^c$ is a model of $S$, and thus is a model of $T$.\qed
\end{proof}

\begin{lemma}[Compactness]
For each set of \F-formulas $T$, if each finite $T_0\subseteq T$ has a $\F_\CS$-model then $T$ has a $\F_\CS$-model.
\end{lemma}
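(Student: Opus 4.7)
The plan is to proceed by contrapositive, reducing compactness to the already-established Lemma \ref{lem:every cons set has model RPLJ} (every $\CS$-consistent set has a $\F_\CS$-model) together with soundness (Theorem \ref{thm:soundness RPLJ}) and the finitary nature of derivations.

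First I would suppose, for contradiction, that $T$ has no $\F_\CS$-model. By the contrapositive of Lemma \ref{lem:every cons set has model RPLJ}, this forces $T$ to be $\CS$-inconsistent, i.e.\ $T \vdash_\CS \bar{0}$. The essential observation is now that derivations in $\F_\CS$ are finite sequences of formulas, so only finitely many premises from $T$ can actually be used in such a derivation. Consequently there exists a finite subset $T_0 \subseteq T$ with $T_0 \vdash_\CS \bar{0}$.

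Next I would invoke soundness (Theorem \ref{thm:soundness RPLJ}) on $T_0$: from $T_0 \vdash_\CS \bar{0}$ we obtain $T_0 \models_\CS \bar{0}$. But in any $\F_\CS$-model $\M=(\W,\R,\E,\V)$ we have $\V_w(\bar{0})=0$ for every $w\in\W$ by clause $\V1$ of the definition of a $\F_\CS$-model, so no such model can satisfy $\bar{0}$ at every world. Therefore $T_0$ has no $\F_\CS$-model, contradicting the hypothesis that every finite subset of $T$ has one.

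There is no real obstacle here; the only point to be careful about is the finitary character of the consequence relation $\vdash_\CS$, which is immediate from the definition of a proof as a finite sequence (modus ponens and GIAN both having finitely many premises). Once that is noted the argument is the standard Lindenbaum/compactness reduction, and the conclusion follows.
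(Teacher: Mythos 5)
Your argument is correct and is essentially the paper's own proof: both pass to the contrapositive, use Lemma \ref{lem:every cons set has model RPLJ} to conclude $T$ is $\CS$-inconsistent, extract a finite inconsistent $T_0\subseteq T$ by the finitary nature of derivations, and conclude via soundness that $T_0$ has no model. You merely spell out the last step (soundness plus $\V_w(\bar{0})=0$) that the paper leaves implicit.
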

\begin{proof}
Suppose that $T$ has no $\F_\CS$-model. Then, by Lemma \ref{lem:every cons set has model RPLJ}, $T$ should be $\CS$-inconsistent. Hence, a finite subset $T_0\subseteq T$ is $\CS$-inconsistent, and thus has no $\F_\CS$-model.\qed
\end{proof}

\begin{lemma}[Conservativity]
Let $A$ be a formula in the language of Rational Pavelka logic {\sf RPL}. If $A$ is provable in  $\F_\CS$, then it is provable in $\RPL$.
\end{lemma}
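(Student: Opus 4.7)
The idea is to prove conservativity by a trivializing semantic argument: I will exhibit, for each $\RPL$-valuation, a one-world $\F_\CS$-model that agrees with it on $\RPL$-formulas, then combine soundness of $\F_\CS$ with completeness of $\RPL$.

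Fix an arbitrary $\RPL$-valuation $\V_0$ (relative to $*_L$). I would build the $\F_\CS$-model $\M = (\W,\R,\E,\V)$ with $\W = \{w\}$, $\R = \{(w,w)\}$, $\V_w(p) := \V_0(p)$ for every propositional variable $p$, and $\E_w(t,B) := 1$ for every term $t$ and every $\F$-formula $B$. Since $\R$ is a crisp binary relation on a non-empty set, this satisfies the structural requirements of an {\sf {\L}J}-model. The three admissible-evidence conditions $F\E 1$, $F\E 2$, $F\E 3$ collapse to $1 *_L 1 \leq 1$, $1 \leq 1$ and $1 = 1$ respectively, so all hold trivially regardless of $\CS$. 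Because $w$ is the only $w$-accessible world, one gets $\V_w^{\Box}(A) = \V_w(A)$ and hence $\V_w(t{:}A) = 1 *_L \V_w(A) = \V_w(A)$, which shows that the extension $\V_w$ is well-defined on the whole of $Fm_\F$.

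A routine induction on the complexity of $F$ then shows that $\V_w(F) = \V_0(F)$ for every $\RPL$-formula $F$: propositional variables and truth constants $\bar{r}$ are handled by the definitions, and the clauses $\V 2$ and $\V 3$ interpret $\r$ and $\&$ by $\Rightarrow_L$ and $*_L$ in both structures. Clause $\V 4$ is simply not triggered since $F$ contains no justification assertions.

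Now suppose $\F_\CS \vdash A$ for an $\RPL$-formula $A$. By the soundness theorem (Theorem \ref{thm:soundness RPLJ}), $A$ is $\F_\CS$-valid, so $\V_w(A) = 1$ in the model just constructed, and therefore $\V_0(A) = 1$. As $\V_0$ was arbitrary, $A$ is a 1-tautology of $\RPL$, and the strong completeness theorem for $\RPL$ (Theorem \ref{thm:strong completeness RPL}, applied with $T = \emptyset$) yields $\RPL \vdash A$. Nothing in the argument looks delicate; the only point that requires any attention is checking that the forced extension of $\V_w$ to formulas of the form $t{:}A$ is well-defined and harmless, which is exactly why the single reflexive accessibility link is a convenient choice.
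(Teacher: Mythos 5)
Your proof is correct and follows essentially the same route as the paper: build a one-world $\F_\CS$-model from an arbitrary $\RPL$-valuation with $\E\equiv 1$, show by induction that it agrees with the valuation on $\RPL$-formulas, and combine soundness of $\F_\CS$ with completeness of $\RPL$. The only (immaterial) difference is that you take $\R=\{(w,w)\}$ where the paper takes $\R=\emptyset$; both choices make the evidence conditions trivial and are irrelevant to the induction, since $\RPL$-formulas contain no justification assertions.
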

\begin{proof}
Suppose $A$ is a formula in the language of {\sf RPL}, and it is provable in  $\F_\CS$.  Now suppose that $\V$ is an arbitrary truth valuation of {\sf RPL}. It suffices to show that  $\V(A)=1$. Then by completeness of {\sf RPL} (Theorem \ref{thm:strong completeness RPL}), ${\sf RPL}\vdash A$. 

Now I aim to construct an $\F_\CS$-model from the valuation $\V$ of \RPL. Let $\M=(\W,\R,\E,\V')$ be defined follows:
\begin{itemize}
\item $\W=\{w_0 \}$.
\item $\R=\emptyset$.
\item $\E_{w_0}(t,A)=1$, for all $t\in Tm$ and $\F$-formula $A$.
\item $\V'_{w_0}(p)=\V(p)$, for all $p\in\mathcal{P}$.
\end{itemize}
Clearly, $\E$ satisfies the conditions $F\E1-F\E3$, and thus $\M$ is an $\F_\CS$-model. By induction on the complexity of $A$ it can be easily shown that $\V(A)=\V_{w_0}(A)$. Since $A$ is provable in  $\F_\CS$, then by Lemma \ref{thm:soundness RPLJ}, $A$ is $\F_\CS$-valid, that is $\V_w(A)=1$ for every $\F_\CS$-model $\M=(\W,\R,\E,\V)$ and every $w\in\W$. Thus, $\V_{w_0}(A)=1$, and hence $\V(A)=1$.
 \qed
\end{proof}

In fact the model constructed in the above proof is a Mkrtychev model of $\F$. Similar to M-models of {\sf BLJ} we can define M-models for $\F$. All the results of this section holds for these M-models too.
%%%%%%%%%%%%%%%%%%%%%%%%%%%%%%%%%%%%%%%%%%%%%%%%%%%%%%%%%%%%%%%%%%%%%%%%%%%%
\section{Other justification principles}\label{sec:Other justification principles}
Another important principle in justification logics is the \textit{factivity axiom},  referred to as the axiom jT:
\[ t:A \r A.\]
In classical justification logics axiom jT states that justifications are factive: every justified statement is true. Let us see how the addition of  axiom jT to our fuzzy justification logics {\sf BLJ}, {\sf {\L}J}, {\sf GJ}, ${\sf \Pi J}$, and $\F$ affects matter (specially the proof of soundness theorems).

In the framework of fuzzy justification logic, at first sight it seems that axiom jT is unacceptable. For weak justifications (justifications with low certainty degree) for a statement, would not necessarily imply the truth of that statement. But note that in the framework of t-norm based fuzzy logics, validity of jT means that the truth value of $t:A$ is less than or equal to the truth value of $A$, which holds in all  fuzzy Fitting models that have a reflexive  accessibility relation $\R$.\footnote{It is worth noting that the modal axiom $\Box A\r A$ is admitted in many fuzzy modal logics, see e.g. \cite{Hajek1998,Hajek2010b,Priest2008}.} The proof is as follows. Let $\M=(\W,\R,\E,\V)$ be an arbitrary fuzzy Fitting model, $w\in\W$ be an arbitrary world, and $\R$ be reflexive. Then 
\[ \V_w^\Box(A) =\inf\{\V_v(A)|w \R v\} \leq \V_w(A).\]
Thus, 
\[ \V_w(t:A) = \E_w(t,A) *_L \V_w^\Box(A) \leq   \V_w^\Box(A)  \leq  \V_w(A). \]

By adding axiom jT to $\F$, the followings are provable:
\begin{enumerate}
\item  $t \overset{1}{:} A \r A$.
\item $t:_r A\rightarrow (\bar{r}\r A)$.
\item  $\neg t:\bar{0}$.
\end{enumerate}

Instead of axiom jT, either of the above formulas can be added to our fuzzy justification logics. The last one is an instance of axiom jT, $t:\bar{0}\r \bar{0}$, and is called axiom jD. This axiom is acceptable if we want to axiomatize consistent belief (i.e. it is impossible to hold contradictory belief), or normative belief (in which the notion of ``ought to believe" is concerned).

It is easy to see that jD is valid in those fuzzy Fitting models of {\sf {\L}J}, {\sf GJ}, ${\sf \Pi J}$, or $\F$ that have a serial accessibility relation $\R$. The proof is as follows. Let $\M=(\W,\R,\E,\V)$ be an arbitrary fuzzy Fitting model, $w\in\W$ be an arbitrary world, and $\R$ be serial. Then, 
\[ \V_w^\Box(\bar{0}) =\inf\{\V_v(\bar{0})|w \R v\} = 0.\]
Thus, $\V_w(t:\bar{0}) =0$. Note that in all logics {\sf {\L}J}, {\sf GJ}, ${\sf \Pi J}$, and $\F$ we have that $\V_w(\neg A)=1$, whenever $\V_w(A)=0$. Hence, $\V_w(\neg t:\bar{0}) =1$.

We leave the study of adding other justification principles to our fuzzy systems for future work.
%%%%%%%%%%%%%%%%%%%%%%%%%%%%%%%%%%%%%%%%%%%%%%%%%%%%%%%%%%%%%%%%%%%%%%%
\section{Conclusion}
We extended fuzzy logic and justification logic to fuzzy justification logic, a framework for reasoning on \textit{justifications under vagueness}. We replaced the classical base of the justification logic {\sf J} with some known fuzzy logics: Hajek's basic logic, {\L}ukasiewicz logic, G\"{o}del logic, product logic, and rational Pavelka logic. In all of the resulting systems we introduced fuzzy models (fuzzy possible world semantics with crisp accessibility relation) for our systems, and established the soundness theorems. In the extension of rational Pavelka logic we also proved a graded-style completeness theorem.

This paper is the first study of fuzzy justification logics, and there are many directions for further work. The main problem remain open in this paper is the proof of the standard completeness theorem (i.e. if a formula is valid, then it is provable). Another direction for further research is to study fuzzy versions of Fitting models with soft accessibility relations $\R:\W\times\W\r [0,1]$. Kripke models of this kind have been studied in the literature for fuzzy modal logics, initiating from the work of Fitting \cite{Fitting1991}. The relationships between our fuzzy versions of {\sf J} and fuzzy versions of modal logic {\sf K} is also interesting (the realization theorem).\\

\noindent
{\bf Acknowledgments}\\

 I would like to thank Mohammad Amin Khatami and  Nazanin Roshandel-Tavana for their useful arguments. Many thanks to Morteza Moniri for bringing the rational Pavelka logic to my attention, and for many discussions and helpful suggestions. This research was in part supported by a grant from IPM. (No. 93030416)
%%%%%%%%%%%%%%%%%%%%%%%%%%%%%%%%%%%%%%%%%%%%%%%%%%%%%%%%%%%%%%%%%%%%%%%%%%%%%%%%

\end{document}